\documentclass[11pt,oneside]{amsart} 
\usepackage{amsmath, amssymb, verbatim, amscd, amsthm, mathrsfs, mathtools}
\usepackage[driverfallback=dvipdfm]{hyperref}
\usepackage{color, graphicx, shortvrb}
\usepackage{enumerate}
\usepackage[all]{xy}

\numberwithin{equation}{section}
\theoremstyle{definition} 
\newtheorem{defn}{Definition}[section]

\theoremstyle{theorem}
\newtheorem{thm}[defn]{Theorem}

\newtheorem{lem}[defn]{Lemma}
\newtheorem{prop}[defn]{Proposition} 
\newtheorem{cor}[defn]{Corollary}

\theoremstyle{remark}
\newtheorem{rem}[defn]{Remark}
\newtheorem*{rem*}{Remark}

\theoremstyle{theorem}
\newcounter{proofcount}
\AtBeginEnvironment{proof}{\stepcounter{proofcount}}
\newtheorem{step}{Step}
\makeatletter
\@addtoreset{step}{proofcount}
\makeatother

\theoremstyle{remark}
\makeatletter
\newtheorem*{sproof/}{Proof of Step \rev@sproofmark}
\newenvironment{sproof}[1][\@nil]
  {\def\@tmp{#1}%
   \ifx\@tmp\@nnil
       \def\rev@sproofmark{\thestep}% no optional argument: take last claim
    \else
       \let\rev@sproofmark\@tmp% take the argument
    \fi
   \pushQED{\qed}\begin{sproof/}}
  {\popQED\end{sproof/}}
\makeatother

\newcommand{\q}{\quad} 
 
\newcommand{\qbox}[1]{\q \mbox{#1} \q}

\newcommand{\C}{\mathbb C} 
\newcommand{\N}{\mathbb N} 
\newcommand{\R}{\mathbb R}

\def\a{\alpha}

\def\Gam{\Gamma}

\def\d{\delta}

\newcommand{\set}[1]{\left\{ #1  \right\}}
\newcommand{\norm}[1]{\left\lVert #1 \right\rVert}
\newcommand{\ab}[1]{\left\lvert #1 \right\rvert}
\newcommand{\inn}[1]{\left\langle #1 \right\rangle}
\newcommand{\ext}[1]{\mathrm{ext}(#1)}

\newcommand{\cx}{C_0(X, \R)}
\newcommand{\cy}{C_0(Y, \R)}
\newcommand{\cl}{C_0(L, \R)}
\newcommand{\sx}{S(\cx)}
\newcommand{\sy}{S(\cy)}
\newcommand{\sol}{S(\cl)}

\usepackage[normalem]{ulem}
\usepackage{marginnote}
\usepackage[top=30truemm,bottom=25truemm,left=35truemm,right=35truemm]{geometry}

\begin{document}

\title[Phase-isometries between the unit spheres]{On phase-isometries between the unit spheres of the Banach space of continuous real-valued functions}

\author[Y.~Enami]{Yuta Enami}
\address[Yuta Enami]{Graduate School of Science and Technology,
Niigata University, Niigata 950-2181, Japan}
\email{f21j008j@mail.cc.niigata-u.ac.jp}

\author[I.~Matsuzaki]{Izuho Matsuzaki}
\address[Izuho Matsuzaki]{Graduate School of Science and Technology,
Niigata University, Niigata 950-2181, Japan}
\email{matsuzaki@m.sc.niigata-u.ac.jp}

\subjclass[2020]{46B04, 46B20, 46J10} 
\keywords{phase-isometry, Wigner theorem, Banach space}

\date{}

\begin{abstract}
    For a locally compact Hausdorff space $L$, 
    we denote by $C_0(L,\mathbb{R})$ 
    the Banach space of all continuous
    real-valued functions on $L$ 
    vanishing at infinity, endowed with 
    the supremum norm. 
    In this paper, we prove that every surjective phase-isometry $T\colon S(C_0(X,\mathbb{R}))\to S(C_0(Y,\mathbb{R}))$ between 
    the unit spheres of $C_0(X,\mathbb{R})$ and $C_0(Y,\mathbb{R})$ is 
    a variant of a 
    weighted composition operator in the following sense:
    there exist a function 
    $\varepsilon\colon S(C_0(X,\mathbb{R}))\to\set{-1,1}$,
    a continuous function $\a\colon Y\to \set{-1,1}$ 
    and a homeomorphism $\sigma\colon Y\to X$ such that $T(f)(q)=\varepsilon(f)\a(q)f(\sigma(q))$ for every $f\in S(C_0(X,\mathbb{R}))$ and 
    $q\in Y$.
\end{abstract}
\maketitle

\section{Introduction}
The celebrated Wigner's  unitary-antiunitary theorem \cite{wig} can be stated as follows:
\textit{Let $T$ be a surjective, not necessarily linear, map  
between complex Hilbert spaces $H_1$ and $H_2$ which satisfies
\begin{equation}\label{1.1}
     \ab{\inn{T(f),T(g)}}=\ab{\inn{f,g}}
\end{equation}
for every $f,g\in H_1$.
Then there exist a unimodular function $\theta\colon H_1\to\C$ and a unitary or antiunitary operator $U$
such that $T=\theta U$.}
This result plays an important role in mathematical physics, and was proved 
in \cite{bargmann, geher, gyory, lomont}.

R\"atz \cite{ratz} generalized the Wigner theorem as follows:
Let $H_1$ and $H_2$ be real inner product spaces. 
If a map $T\colon H_1\to H_2$ satisfies \eqref{1.1},
then there exist a function $\theta\colon H_1\to \set{-1,1}$
and a linear isometry $U\colon H_1\to H_2$ such that
$T=\theta U$.
Following the R\"atz result,
Maksa and P\'ales \cite{mak} gave 
some equivalent conditions for \eqref{1.1}, 
provided that $H_1$ and $H_2$ are real inner product spaces. 
The following equality, which does 
not use inner products, 
is one of these conditions:
\begin{equation}\label{1.2}
    \set{\norm{T(f)+T(g)},\norm{T(f)-T(g)}}=\set{\norm{
    f+g},\norm{f-g}}
\end{equation}
for every $f,g\in H_1$.
Thus we can consider an analogue of the Wigner theorem in normed, not necessarily inner product, spaces. 
More concretely, one can define maps $T\colon H_1\to H_2$ satisfying \eqref{1.2}, provided that $H_1$ and $H_2$ are both real normed spaces: such maps are said to be \textit{phase-isometries}.

There are several results characterizing surjective 
phase-isometries between concrete normed spaces; 
see \cite{huangtan, jiatan, litan, mak, ZeHu}. 
Ili\v sevi\'c, Omladi\v c and Turn\v sek \cite{ili} proved a crucial result that every surjective phase-isometry $T$ between arbitrary real normed spaces $E_1$ and $E_2$ can be expressed as $T=\theta U$, where $\theta$ is a unimodular function on $E_1$ and $U$ is a surjective linear isometry between $E_1$ and $E_2$. 
Therefore, the study of surjective phase-isometries  on whole real normed spaces is essentially equivalent to the study of surjective isometries.
Recently, several papers concerning 
surjective phase-isometries between particular subsets
of real normed spaces have been published: see \cite{HMM, sun2, sun, tangao,TZH}.
In this paper, we focus on  surjective phase-isometries between the unit spheres of the spaces of continuous functions.

Throughout this paper, for a real Banach space $E$, the symbol $S(E)$ denotes 
the unit sphere of $E$. 
Let $L$ be a locally compact Hausdorff space. 
The symbol $\cl$ denotes the real Banach space of 
all continuous real-valued functions on $L$
vanishing at infinity, endowed with 
the supremum norm $\norm{h}=\sup_{x\in L}|h(x)|$.
The following is the main theorem of this paper.
\begin{thm}\label{MainTheorem}
    Assume that $X$ and $Y$ are locally compact Hausdorff spaces and that $T\colon\sx\to\sy$ is a surjective phase-isometry.
    Then there exist 
    a function $\varepsilon\colon\sx\to\set{-1,1}$, 
    a continuous function $\a\colon Y\to\set{-1,1}$ and
    a homeomorphism $\sigma\colon Y\to X$ such that
    \begin{equation*}
        T(f)(q)=\varepsilon(f)\a(q)f(\sigma(q))
    \end{equation*}
    for all $f\in \sx$ and $q\in Y$.
\end{thm}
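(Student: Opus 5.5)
We reduce the statement to the known structure of surjective isometries between unit spheres, that is, to Tingley's problem for $C_0$ spaces. The plan is to build, from $T$, a genuine surjective isometry $\tilde T\colon\sx\to\sy$ with $\tilde T(f)=\pm T(f)$ for every $f$. The resolution of Tingley's problem for $C_0(L,\R)$ (Wang's theorem) shows that $\tilde T$ extends to a surjective real-linear isometry $C_0(X,\R)\to C_0(Y,\R)$. The Banach--Stone theorem then gives $\tilde T(f)(q)=\a(q)f(\sigma(q))$ with $\a\colon Y\to\set{-1,1}$ continuous and $\sigma\colon Y\to X$ a homeomorphism. Setting $\varepsilon(f)=\pm1$ according to $T(f)=\pm\tilde T(f)$ finishes the proof.

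The central step is choosing the signs consistently. We work on the quotient sphere $\sx/\set{\pm1}$ with the metric $d([f],[g])=\min\set{\norm{f-g},\norm{f+g}}$. Condition \eqref{1.2} says that $T$ induces a well-defined map $[f]\mapsto[T(f)]$. Taking minima and maxima of \eqref{1.2}, this map preserves both $\min$ and $\max$ of $\set{\norm{f\pm g}}$. It is also injective: if $T(f)=\pm T(g)$, then $0\in\set{\norm{f+g},\norm{f-g}}$, so $f=\pm g$. We must show that, after a choice of sign for each class, $\norm{\tilde T(f)-\tilde T(g)}=\norm{f-g}$ holds for all pairs simultaneously.

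To do this we first recover the maximal convex subsets (faces) of the spheres up to sign, using the metric. For $f,g$ in a common face, i.e. $\norm{f+g}=2$, we have $\norm{f-g}\le2$. So we try to characterize ``$f,-f$ belong to the same pair of opposite faces as $g,-g$'' through the quantity $\max\set{\norm{f\pm g}}=2$ together with distance conditions to other points. The faces of $\sx$ are $F_{x,\pm}=\set{f:f(x)=\pm1}$ for $x\in X$. We aim to show that $T$ maps each pair $\pm F_{x}$ onto a pair $\pm F_{y}$. This yields a bijection $\sigma$ between the points, and a sign on each face, determined from the value of $T(f)$ at $y$. The natural tool is the Tingley-type characterization of maximal convex sets via intersections of spheres $\set{g:\norm{g-f}=2}$. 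Here it has to be adapted to the symmetric quantities $\max$ and $\min$, using functions peaking at a single point.

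With $\sigma$ in hand, we show directly that $\ab{T(f)(q)}=\ab{f(\sigma(q))}$. The method is to compare $f$ with suitable peak functions $g\in F_{\sigma(q)}$ and read off $\norm{f\pm g}$. We then show that the sign $T(f)(q)/(\a(q)f(\sigma(q)))$ is constant on the support of $f$. If it changed, we would choose $g$ to be a small perturbation of $f$ near one point only; then both $\norm{f+g}$ and $\norm{f-g}$ would be computed on two different regions and would contradict \eqref{1.2}. We also show that $\a$ is continuous and $\sigma$ is a homeomorphism, using Urysohn functions.

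The main obstacle is this sign consistency when the support of $f$ is disconnected or $f$ vanishes on large sets. We plan to treat it by contradiction: we would try to build $g$ for which the multiset $\set{\norm{T(f)\pm T(g)}}$ mixes contributions from the two sign regions and therefore differs from $\set{\norm{f\pm g}}$.
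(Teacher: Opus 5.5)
Your proposal has a genuine gap at the step you call the main obstacle, and that step is the entire content of the theorem. The reduction to Wang's solution of Tingley's problem does no work. A sign choice making $\tilde T=\pm T$ an isometry exists precisely when the conclusion holds (take $\tilde T(f)=\a\cdot(f\circ\sigma)$), and your later paragraphs derive the formula directly anyway. More seriously, the ``sign on each face, determined from the value of $T(f)$ at $y$'' is not well defined. For $f\in M_x$, the sign of $T(f)(\tau(x))$ depends on $f$: $\theta T$ is again a surjective phase-isometry for every $\theta\colon\sx\to\set{-1,1}$ with $\theta(f)=\theta(-f)$ (Lemma~\ref{piso}). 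So there is no $\a$ against which to test $T(f)(q)/(\a(q)f(\sigma(q)))$ until two things are done. First, $T$ must be normalized elementwise on each $M_x\cup -M_x$ (the paper's $\theta_x$, $\Phi_x$). Second, the relative signs between points must be fixed through a base point $p$ (the paper's $\rho$, read off on $M_x\cap M_p$).

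Your perturbation argument cannot supply this. Once $|T(f)(q)|=|f(\sigma(q))|$ is known, it gives only local constancy. If $\norm{f-g}<\min\set{|f(x_1)|,|f(x_2)|}$, then $\norm{T(f)-sT(g)}=\norm{f-g}$ for some sign $s$. This forces the relative sign $T(f)(\tau(x_1))T(f)(\tau(x_2))/(f(x_1)f(x_2))$ to agree for $f$ and $g$, which is the analogue of Lemma~\ref{lem3.17}. But a small perturbation $g$ of $f$ simply inherits the sign pattern of $f$, so $\set{\norm{T(f)\pm T(g)}}=\set{\norm{f\pm g}}$ is not violated and no contradiction arises. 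Nor can local constancy compare functions whose values $(f(x_1),f(x_2))$ lie in the quadrants $(+,+)$ and $(+,-)$: every path between them crosses $f(x_2)=0$, where the relative sign is undefined.

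The paper closes this with two global steps your plan lacks. Lemma~\ref{lem3.18} uses the surjectivity of $\Phi_x$ to pull back some $u_0\in -N_{\tau(x)}\cap rN_{\tau(y)}$. It concludes that $M_x\cap sM_y$ and $-M_x\cap sM_y$ land in the same $rN_{\tau(y)}$. Lemma~\ref{lem3. 19} uses points of $tM_x\cap sM_y\cap M_p$ (Proposition~\ref{Urysohn}) to show $\rho(x)\Phi_x=\rho(y)\Phi_y$ on every $tM_x\cap sM_y$, so that the rearranged map $\Phi$ is well defined.

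Your peak-function step for $|T(f)(q)|=|f(\sigma(q))|$ does work once the face correspondence of Lemma~\ref{lemTZH} and Lemma~\ref{Mchara} is in place; that correspondence is only announced in your proposal. Take $g\in M_{\sigma(q)}$ with small support; then $1+|T(f)(q)|\le\max\norm{T(f)\pm T(g)}=\max\norm{f\pm g}\le 1+|f(\sigma(q))|+\epsilon$, and the reverse inequality follows by applying the same argument to $T^{-1}$. But this step leaves the sign problem untouched.
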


\begin{rem}\label{rem1}
Tan and Gao gave a characterization of surjective phase-isometries between the unit spheres of $C(X, \R)$ and $C(Y, \R)$ on compact Hausdorff spaces $X$ and $Y$ (see \cite[Theorem 1]{tangao}). 
Our main result  does not assume the compactness of $X$ and $Y$. 
Thus it generalizes their result. 

On the other hand, Tan, Zhang and Huang investigated surjective phase-isometries from the unit sphere of a CL-space $E_1$ onto the unit sphere of an arbitrary Banach space $E_2$. 
They claimed that every surjective phase-isometry $T \colon S(E_1) \to S(E_2)$ can be expressed as $T = \theta U$, where $\theta$ is a unimodular function on $S(E_1)$ and $U \colon S(E_1) \to S(E_2)$ is a surjective isometry (see \cite[Theorem 3.4]{TZH}). 
Since $C_0(L, \R)$ is a CL-space as will be verified later, the result of Tan, Zhang and Huang, together with the Banach--Stone theorem, leads to  Theorem~\ref{MainTheorem}.
Thus our main result can be regarded as a corollary of their result.

However, it appears to the authors that there is a step in their proof that requires further justification. 
We furthermore provide a self-contained proof of Theorem~\ref{MainTheorem}, based on the ideas of Tan, Zhang and Huang \cite{TZH}. 
\end{rem}

\section{Preliminaries}
In the rest of this paper, we assume that all Banach spaces are real.
In this section, we present some lemmas on general properties of the phase-isometries and the maximal convex subsets of $S(\cl)$.

With the aid of the following result, we see that  
every surjective phase-isometry preserves the antipodal points.
Moreover, 
every surjective phase-isometry has its inverse. 
\begin{lem}[{Tan and Gao \cite[Lemma 2]{tangao}}]\label{lem2.0}
  Suppose that $E_1$ and $E_2$ are Banach spaces and $T\colon S(E_1)\to S(E_2)$ is a surjective phase-isometry.
  Then the identity $T(-f)=-T(f)$ holds for every $f\in S(E_1)$. 
  Moreover, $T$ is injective and 
  its inverse $T^{-1}$ is
  also a surjective phase-isometry. 
\end{lem}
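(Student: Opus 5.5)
The plan is to extract everything from the defining identity \eqref{1.2}, applied to cleverly chosen pairs, together with the surjectivity of $T$. The one elementary observation used throughout is that for $f,g\in S(E_1)$ we have $\set{\norm{f+g},\norm{f-g}}=\set{0,2}$ if and only if $g=\pm f$. Indeed, if one of the two norms vanishes, then $g=\pm f$; conversely, $g=\pm f$ gives the norms $0$ and $2$. The same holds in $S(E_2)$.

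\textbf{Step 1: $T(-f)=\pm T(f)$.} Fix $f\in S(E_1)$ and apply \eqref{1.2} to the pair $(f,-f)$. The right-hand side is $\set{0,2}$, so
\begin{equation*}
\set{\norm{T(f)+T(-f)},\norm{T(f)-T(-f)}}=\set{0,2}.
\end{equation*}
By the observation, $T(-f)=\pm T(f)$.

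\textbf{Step 2: ruling out $T(-f)=T(f)$.} This is the only step needing care, and surjectivity is used here. Suppose $T(-f)=T(f)=:u$. Since $-u\in S(E_2)$, choose $h\in S(E_1)$ with $T(h)=-u$. Applying \eqref{1.2} to the pair $(f,h)$ gives
\begin{equation*}
\set{\norm{f+h},\norm{f-h}}=\set{\norm{u-u},\norm{u+u}}=\set{0,2},
\end{equation*}
so $h=\pm f$. In either case $T(h)\in\set{T(f),T(-f)}=\set{u}$, so $T(h)=u$. But $T(h)=-u$, and $u\neq -u$ because $\norm{u}=1$. This contradiction shows $T(-f)=-T(f)$.

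\textbf{Step 3: injectivity.} Suppose $T(f)=T(g)$. Applying \eqref{1.2} to $(f,g)$ gives
\begin{equation*}
\set{\norm{f+g},\norm{f-g}}=\set{\norm{T(f)+T(g)},\norm{T(f)-T(g)}}=\set{2,0},
\end{equation*}
so $g=\pm f$. If $g=-f$, then $T(f)=T(-f)=-T(f)$ by Step 2, which is impossible on the unit sphere. Hence $g=f$.

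\textbf{Step 4: the inverse.} By Step 3, $T$ is a bijection, so $T^{-1}\colon S(E_2)\to S(E_1)$ exists and is surjective. For $u,v\in S(E_2)$, apply \eqref{1.2} to $f=T^{-1}(u)$ and $g=T^{-1}(v)$. This gives
\begin{equation*}
\set{\norm{T^{-1}(u)+T^{-1}(v)},\norm{T^{-1}(u)-T^{-1}(v)}}=\set{\norm{u+v},\norm{u-v}},
\end{equation*}
so $T^{-1}$ is a surjective phase-isometry.
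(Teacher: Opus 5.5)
Your proof is correct and complete. Applying \eqref{1.2} to $(f,-f)$ gives $T(-f)=\pm T(f)$, and surjectivity, via a preimage $h$ of $-T(f)$, rules out $T(-f)=T(f)$. Injectivity and the phase-isometry property of $T^{-1}$ then follow directly from \eqref{1.2}, exactly as you wrote.

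The paper gives no proof of this lemma and simply quotes it from Tan and Gao, so there is no argument in the text to compare yours against; yours is a valid self-contained one. You use surjectivity in Step 2, and some such hypothesis is needed there. For instance, if $U$ is a linear isometry and $\theta$ satisfies $\theta(-f)=-\theta(f)$, then $\theta U$ is a non-surjective phase-isometry with $\theta U(-f)=\theta U(f)$.
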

Note that, by Lemma~\ref{lem2.0} and a straightforward calculation, $T(-A)=-T(A)$ holds
for every $A\subset S(E_1)$.

We introduce a concept which connects phase-isometries and isometries.

\begin{defn}
Let $E_1$ and $E_2$ be Banach spaces and let $U_1, U_2\colon S(E_1)\to S(E_2)$ be maps. 
We say that $U_1$ and $U_2$ are \textit{phase-equivalent}, and write $U_1\simeq U_2$, if there exists $\theta\colon S(E_1)\to\set{-1, 1}$ such that $U_1=\theta U_2$.  
\end{defn}

Note that the phase-equivalence $\simeq$ 
is an equivalence relation.
In the next lemma, using the relation $\simeq$, we provide a condition that characterizes when a map $U$ becomes a surjective phase-isometry.

\begin{lem}\label{piso}
    Suppose that $E_1$ and $E_2$ are Banach spaces.
    Let $T\colon S(E_1)\to S(E_2)$ be a surjective phase-isometry and let $\theta\colon S(E_1)\to\set{-1,1}$ be a function with $\theta(f)=\theta(-f)$ for every $f\in S(E_1)$.
    Then the map $U$ defined as $U=\theta T$ is also a surjective phase-isometry.
\end{lem}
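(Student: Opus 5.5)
We need to prove that $U=\theta T$ is a phase-isometry from $S(E_1)$ to $S(E_2)$ and that it is surjective. Both parts come straight from the definitions, with Lemma~\ref{lem2.0} supplying the antipodal property for surjectivity.

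First, $U$ maps into $S(E_2)$: for $f\in S(E_1)$ we have $\norm{U(f)}=\ab{\theta(f)}\norm{T(f)}=1$.

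For the phase-isometry identity \eqref{1.2}, fix $f,g\in S(E_1)$ and write $\theta(f)=s$, $\theta(g)=t$ with $s,t\in\set{-1,1}$. Then
\[
\norm{U(f)\pm U(g)}=\norm{sT(f)\pm tT(g)}=\norm{T(f)\pm stT(g)}.
\]
If $st=1$, the unordered pair $\set{\norm{U(f)+U(g)},\norm{U(f)-U(g)}}$ is exactly $\set{\norm{T(f)+T(g)},\norm{T(f)-T(g)}}$. If $st=-1$, the two entries are merely swapped. In either case the set is unchanged, and since $T$ satisfies \eqref{1.2} it equals $\set{\norm{f+g},\norm{f-g}}$. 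This part does not use the evenness of $\theta$.

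Surjectivity is where the hypothesis $\theta(f)=\theta(-f)$ is needed. Take $h\in S(E_2)$. Since $T$ is surjective, $h=T(f)$ for some $f\in S(E_1)$.

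If $\theta(f)=1$, then $U(f)=T(f)=h$ and we are done.

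If $\theta(f)=-1$, consider $-f\in S(E_1)$. By Lemma~\ref{lem2.0} we have $T(-f)=-T(f)$, and by evenness $\theta(-f)=\theta(f)=-1$. Hence
\[
U(-f)=\theta(-f)T(-f)=(-1)\bigl(-T(f)\bigr)=h.
\]
So $h$ lies in the image of $U$ in both cases.

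There is no real obstacle here. The only points that need care are recognising that a sign change merely permutes the two-element set in \eqref{1.2}, and combining the evenness of $\theta$ with $T(-f)=-T(f)$ from Lemma~\ref{lem2.0} to obtain surjectivity.
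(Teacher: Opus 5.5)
Your proposal is correct and follows essentially the same route as the paper. Both pull the signs out to see that the pair $\set{\norm{U(f)+U(g)},\norm{U(f)-U(g)}}$ at most swaps its two entries, and both get surjectivity by evaluating $U$ at $\theta(f_0)f_0$ using evenness of $\theta$ and $T(-f)=-T(f)$. The paper writes this as a single constant $c=\theta(f_0)$ where you split into the cases $\theta(f)=\pm 1$.
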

\begin{proof}
Fix $f,g\in S(E_1)$. 
Having in mind that $\theta(f) \in \set{-1,1}$, we have  
\begin{align*}
 \norm{U(f) \pm U(g)} 
=  \norm{\theta(f) T(f) \pm \theta(g) T(g)}
= \norm{T(f) \pm \theta(f) \theta(g) T(g)}, 
\end{align*} 
where equalities hold with the same sign.
Since $\theta(f) \theta(g) \in \set{-1,1}$ and $T$ is a phase-isometry, the last equalities show that 
\begin{align*}
\set{\norm{U(f) + U(g)}, \norm{U(f) - U(g)}}
= &\set{\norm{T(f) + T(g)}, \norm{T(f) - T(g)}} \\
= &\set{\norm{f + g}, \norm{f - g}}.
\end{align*}
Hence the map $U$ is a phase-isometry.

To prove the surjectivity of $U$, fix $u \in S(E_2)$. 
The surjectivity of $T$ ensures the existence of $f_0 \in S(E_1)$ with $T(f_0) = u$. 
Set $c=\theta(f_0)\in\set{-1,1}$.
We see that 
\begin{align*}
\theta(c f_0) = c \quad \text{and} \quad 
T(c f_0) = c T(f_0). 
\end{align*}
Indeed, these equalities are trivial if $c = 1$, and follow from the assumption on $\theta$ and Lemma~\ref{lem2.0} if $c = - 1$. 
Hence we obtain 
\begin{align*}
U(c f_0) 
= \theta(c f_0)T(c f_0) 
= c^2 T(f_0) 
= T(f_0) 
= u, 
\end{align*}
which proves the surjectivity of $U$. 
\end{proof}

Every surjective phase-isometry between the unit spheres of Banach spaces preserves the union of each maximal convex set $M$ and its antipodal face $-M$.

\begin{lem}[{Tan, Zhang and Huang \cite[Proposition~2.5]{TZH}}]\label{lemTZH}
Let $E_1$ and $E_2$ be Banach spaces and let $T \colon S(E_1) \to S(E_2)$ be a surjective phase-isometry. 
For every maximal convex subset $M$ of $S(E_1)$, there exists a  maximal convex subset $N$ of $S(E_2)$ such that
\begin{align*}
T(M \cup -M) = N \cup -N. 
\end{align*}
\end{lem}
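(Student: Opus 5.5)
Since $M\cup -M$ is determined by $M$ and $T$ is a bijection between the spheres with $T(-A)=-T(A)$ (Lemma~\ref{lem2.0}), the plan is to characterize the sets of the form $M\cup -M$, with $M$ a maximal convex subset of $S(E_1)$, purely in terms of the quantities $\set{\norm{f+g},\norm{f-g}}$, which $T$ preserves. The first observation is the standard one: a subset $C\subset S(E)$ is contained in a maximal convex subset iff $\norm{f+g}=2$ for all $f,g\in C$. Indeed, if $\norm{f+g}=2$ for all $f,g\in C$ then $\operatorname{conv}C\subset S(E)$ via a norming functional argument (for finitely many points one applies Hahn--Banach to show a common supporting functional exists: the pairwise condition alone is not enough in general, so more care is needed; see below). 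Consequently, maximal convex sets are exactly the maximal faces $F_\varphi=\set{f\in S(E):\varphi(f)=1}$ for norm-one functionals $\varphi$ attaining their norm, with maximality among such.

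Step 1. For $f,g$ in $M\cup -M$ we have $\max\set{\norm{f+g},\norm{f-g}}=2$, and conversely this condition says that for each pair either $g$ or $-g$ lies on a common face with $f$. Since $T$ preserves the unordered pair $\set{\norm{f+g},\norm{f-g}}$, $T$ preserves the relation $f\sim g \iff \max\set{\norm{f\pm g}}=2$. So $T(M\cup -M)$ is a set $A\subset S(E_2)$ which is symmetric and pairwise $\sim$-related, and maximal with this property (because $T^{-1}$ is also a surjective phase-isometry, by Lemma~\ref{lem2.0}), provided that the sets $M\cup -M$ are exactly the maximal symmetric pairwise $\sim$-related sets.

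Step 2 (the main obstacle). Given a maximal symmetric pairwise $\sim$-related set $A\subset S(E_2)$, produce a maximal convex $N$ with $A=N\cup -N$. I would pick $u\in A$ and set $N=\set{v\in A:\norm{u+v}=2}$, arranged so that $v\in N$ or $-v\in N$ for each $v\in A$; the difficulty is showing $N$ is pairwise compatible ($\norm{v+w}=2$ for $v,w\in N$), not just compatible with $u$, and then that it is convex. To get there, first apply Step 1 to $T^{-1}$: $A=T(B)$ for some symmetric $\sim$-clique $B$ containing $T(M\cup -M)$'s preimage structure, and since $M\cup -M\subset T^{-1}(A)$, maximality forces $T^{-1}(A)=M\cup -M$ if and only if $M\cup -M$ is itself a maximal symmetric clique. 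So the real work is on the $E_1$ side: show $M\cup -M$ is a maximal symmetric clique, and that any maximal symmetric clique is of this form. For the latter, I would use a Zorn/compactness argument: for a symmetric clique $A$, choose signs by a maximal consistent selection $N\subset A$ with $\norm{v+w}=2$ throughout, take the weak$^*$-closed faces of the dual ball $\set{\varphi:\varphi(v)=1}$ for $v\in N$, and use the finite intersection property (which is where the pairwise condition must be upgraded, via a triangle-type argument with $\norm{v+w}=2=\norm{w+x}$ plus membership of $\pm(v+w)/2$ considerations) to get a common $\varphi$; then $N\subset F_\varphi$ and maximality yields equality. I expect this finite-intersection upgrade to be the delicate point, and I would first try to handle it by showing that midpoints $(v+w)/2$ can be adjoined to the clique while preserving the $\sim$ relation, so that the clique may be assumed convex-closed on the chosen sign class.

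Step 3. Conclude $T(M\cup -M)=A=N\cup -N$ with $N$ maximal convex, by comparing with $T^{-1}$ as above.
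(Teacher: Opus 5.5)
\paragraph{The gap is in Step~2.} Step~2 asks you to show that every maximal symmetric $\sim$-clique equals $N\cup -N$ for some maximal convex $N$ (and, on the $E_1$ side, that every maximal symmetric clique has the form $M\cup -M$). This is false. So is the claim that a set with $\norm{f+g}=2$ for all its pairs lies in a maximal convex subset: your own parenthetical half-concedes this, contradicting the ``iff'' of your first paragraph.

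Take $E=C(\set{1,2,3},\R)$. By Lemma~\ref{Mchara} its maximal convex subsets are $\pm M_k=\set{u\in S(E):u(k)=\pm 1}$. Let $f=(1,1,0)$, $g=(0,1,1)$, $h=(1,0,1)$. Then $\norm{f+g}=\norm{g+h}=\norm{h+f}=2$, so $\set{\pm f,\pm g,\pm h}$ is a symmetric clique. But $M_1\cup -M_1$, $M_2\cup -M_2$, $M_3\cup -M_3$ miss $g$, $h$, $f$ respectively, so no maximal clique containing it has the form $N\cup -N$.

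Your proposed repair fails on the same example. The midpoint $\tfrac12(f+g)=(\tfrac12,1,\tfrac12)$ has $\norm{\tfrac12(f+g)+h}=\tfrac32$ and $\norm{\tfrac12(f+g)-h}=1$, so midpoints cannot be adjoined while keeping the $\sim$ relation. Consequently no argument that uses only ``$T(M\cup -M)$ is a maximal clique'' can succeed. You must use that $T(M\cup -M)$ is the image of a \emph{convex} set.

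\paragraph{What survives and how to complete it.} The half you can keep is that $M\cup -M$ is a maximal clique; you assume this but do not prove it. It is easy. If $y\notin M\cup -M$, then since $M=\set{z\in S(E_1):\norm{z+x}=2\ \forall x\in M}$, there are $x_1,x_2\in M$ with $\norm{x_1+y}<2$ and $\norm{x_2-y}<2$. Then $x=\tfrac12(x_1+x_2)\in M$ satisfies $\norm{x\pm y}<2$, because a norm-one functional equal to $1$ at $x$ equals $1$ at both $x_1$ and $x_2$.

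The missing idea is to push convexity through $T^{-1}$.
\begin{itemize}
\item Given $x_1,\dots,x_n\in M$ with barycenter $b$, choose any maximal convex $G\ni T(b)$.
\item Since $T$ is a $\sim$-preserving bijection, $C=T^{-1}(G\cup -G)$ is a maximal clique containing $b$.
\item Each $z\in C$ satisfies $z\sim b$. So there is a norm-one $\psi$ with $\psi(z)=1$ and $\psi(b)=s\in\set{-1,1}$, and then $\psi(x_i)=s$ for every $i$. Hence $z\sim x_i$ for all $i$.
\item By maximality $x_i\in C$, i.e.\ $T(x_i)\in G\cup -G$.
\end{itemize}
Together with Lemma~\ref{lemHOM}, this gives the finite intersection property for the weak$^*$-closed sets $\set{\psi\in B(E_2^*):\ab{\psi(T(x))}=1}$, $x\in M$. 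A common $\psi$ then gives $T(M\cup -M)\subset N\cup -N$ for a maximal convex $N\supset\set{u\in S(E_2):\psi(u)=1}$. Maximality of the clique $T(M\cup -M)$ yields equality.

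The paper itself does not prove this lemma; it quotes it from Tan, Zhang and Huang, so there is no in-paper proof to compare your route against.
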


Motivated by the previous lemma, we shall characterize the maximal convex subsets of $S(\cl)$.
For each $(t,x)\in \set{-1,1}\times L$, we set 
\begin{equation*}
    tM_x=\set{f\in S(\cl):f(x)=t}.
\end{equation*}
To simplify notation, we write $M_x$ for $1M_x$ and $-M_x$ for $-1M_x$. 
Note that $M_x\cap-M_x=\emptyset$ for every $x\in L$.
Note also that $tM_x$ is a closed convex subset of $\sol$. 
In particular, it is closed and connected. 
In Lemma~\ref{HOM'}, we will prove that $tM_x$ is a maximal convex subset of $S(\cl)$.

The following lemma and its corollary will be used for characterizing the maximal convex subset of $S(\cl)$.
\begin{lem}\label{inclusion}
    If $(t, x), (s,y)\in\set{-1, 1}\times L$ satisfy $tM_{x}\subset sM_y$, then $(t, x)=(s, y)$. 
\end{lem}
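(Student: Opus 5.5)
The plan is to argue by contraposition, using Urysohn's lemma for locally compact Hausdorff spaces to build a function lying in $tM_x$ but not in $sM_y$ whenever $(t,x)\neq(s,y)$. First I check that $tM_x$ is nonempty. By Urysohn's lemma for locally compact Hausdorff spaces, there is a continuous function $h\colon L\to[0,1]$ with compact support and $h(x)=1$. Then $th\in S(\cl)$ and $th(x)=t$, so $th\in tM_x$.

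Suppose first that $x\neq y$. Since $L$ is Hausdorff, choose an open neighbourhood $U$ of $x$ with $y\notin U$. By Urysohn's lemma again, take $h\colon L\to[0,1]$ continuous with compact support contained in $U$ and $h(x)=1$. Then $h(y)=0$. Consequently $f=th$ lies in $S(\cl)$, satisfies $f(x)=t$, and has $f(y)=0\neq s$. Thus $f\in tM_x\setminus sM_y$, which contradicts $tM_x\subset sM_y$. Hence $x=y$.

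Now let $x=y$, so that $tM_x\subset sM_x$. Pick any $f\in tM_x$, which exists by the first step. Then $f(x)=t$ and also $f(x)=s$, so $t=s$. This gives $(t,x)=(s,y)$.

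The only step needing any care is the existence of a norm-one function in $C_0(L,\R)$ peaking at $x$ and vanishing at $y$. It follows from the locally compact version of Urysohn's lemma, applied to the compact set $\set{x}$ and the open set $U$. So I expect no real obstacle here.
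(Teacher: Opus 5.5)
Your proof is correct and follows the same route as the paper's: Urysohn's lemma gives an element of $tM_x$ vanishing at $y$, which forces $x=y$, and then evaluating any element of $tM_x$ at $x$ gives $t=s$. You also check explicitly that $tM_x\neq\emptyset$, which the paper uses without comment.
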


\begin{proof}
    Let $(t, x), (s, y)\in \set{-1, 1}\times L$ satisfy $tM_x\subset sM_y$. 
    Suppose that $x\neq y$. 
    By Urysohn's lemma, there exists $f_0\in tM_x$
    such that $f_0(y)=0$, and thus $f_0\notin s M_y$.
    This contradicts the assumption that $tM_x\subset s M_y$. 
    Then $x=y$. 
    Taking $g_0\in tM_x \subset sM_y=s M_x$,  
    we obtain $t=g_0(x)=s$, and thus $(t, x)=(s, y)$. 
\end{proof}

\begin{cor}\label{inclusion2}
    If $x, y\in L$ satisfy $M_x\cup -M_x\subset M_y\cup -M_y$,  then $x=y$. 
\end{cor}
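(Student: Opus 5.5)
The plan is to reduce Corollary~\ref{inclusion2} to Lemma~\ref{inclusion} by showing that each of $M_x$ and $-M_x$ lies entirely inside one of $M_y$ or $-M_y$. The tool for this is connectedness. As noted before Lemma~\ref{inclusion}, each $tM_x$ is convex, hence connected. Also, $M_y$ and $-M_y$ are disjoint closed subsets of $\sol$.

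First I treat $M_x$. Assume $M_x\cup -M_x\subset M_y\cup -M_y$. Then
\begin{equation*}
M_x = (M_x\cap M_y)\cup(M_x\cap -M_y),
\end{equation*}
and this is a decomposition of $M_x$ into two disjoint, relatively closed subsets. Since $M_x$ is connected and nonempty, one of the two pieces must be empty. For nonemptiness, Urysohn's lemma gives a function in $\sol$ with value $1$ at $x$. Hence $M_x\subset sM_y$ for some $s\in\set{-1,1}$. Lemma~\ref{inclusion} then gives $(1,x)=(s,y)$, and in particular $x=y$.

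This argument needs no case analysis. The only point to verify is the connectedness and nonemptiness of $M_x$, and both are immediate. In place of the connectedness argument, one could instead use convexity directly: if $f\in M_x\cap M_y$ and $g\in M_x\cap -M_y$, then $(f+g)/2$ lies in $M_x$ but takes the value $0$ at $y$. This contradicts the inclusion $M_x\subset M_y\cup -M_y$.
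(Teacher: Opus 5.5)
Your proof is correct and matches the paper's argument: the paper likewise uses that $M_x$ is connected and that $M_y$, $-M_y$ are disjoint closed sets to conclude $M_x\subset M_y$ or $M_x\subset -M_y$, and then invokes Lemma~\ref{inclusion}. Your alternative midpoint argument via convexity is also valid and avoids the topological step entirely.
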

\begin{proof}
    Suppose that $x, y\in L$ satisfy $M_x\cup -M_x\subset M_y\cup -M_y$.
    Then $M_x\subset M_y\cup -M_y$. 
    Recall that $M_x$, $M_y$ and $-M_y$ are closed and connected.
    Since $M_y\cap -M_y=\emptyset$, it follows 
    that either 
    $M_x\subset M_y$ or $M_x\subset -M_y$ holds.
    Hence $x=y$ by Lemma~\ref{inclusion}.
\end{proof}

For any Banach space $E$, the symbol $B(E^*)$ denotes the closed unit ball of the dual space $E^*$ of $E$ and the symbol $\ext{B(E^*)}$ denotes the set of all extreme points of $B(E^*)$.
In general, the maximal convex subsets of $S(E)$ are deeply related to the extreme points of $B(E^*)$ as follows:

\begin{lem}[{Hatori, Oi and Shindo-Togashi \cite[Lemma 3.1]{HatoriOiShindo}}]\label{lemHOM}
    Let $E$ be a Banach space. 
    For every maximal convex subset $M$ of $S(E)$,
    there exists $\xi\in \ext{B(E^*)}$ such that $M=\xi^{-1}(\set{1})\cap S(E)$.
\end{lem}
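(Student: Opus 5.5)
The plan is to find a norm-one functional that attains the value $1$ on all of $M$, then use a face argument with the Krein--Milman theorem to replace it by an extreme point of $B(E^*)$, and finally use maximality of $M$.

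First, I will show that the set
\begin{equation*}
K=\set{\eta\in B(E^*):\eta(m)=1 \text{ for every } m\in M}
\end{equation*}
is nonempty.
\begin{itemize}
\item Since $M$ is a convex subset of $S(E)$, it is disjoint from the open unit ball $U=\set{x\in E:\norm{x}<1}$, which is open and convex.
\item The Hahn--Banach separation theorem gives a nonzero $\eta\in E^*$ and $c\in\R$ with $\eta(u)<c\le\eta(m)$ for all $u\in U$ and $m\in M$.
\item Normalizing so that $\norm{\eta}=1$, we get $\sup_{u\in U}\eta(u)=1$, hence $c\ge 1$ and $\eta(m)\ge1$ on $M$.
\item Since also $\eta(m)\le\norm{m}=1$, we conclude $\eta(m)=1$ on $M$, so $\eta\in K$.
\end{itemize}

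Next, I will observe that $K$ is a weak$^*$ closed convex face of $B(E^*)$.
\begin{itemize}
\item Convexity and weak$^*$ closedness are immediate, because $K$ is an intersection of $B(E^*)$ with the weak$^*$ closed hyperplanes $\set{\eta:\eta(m)=1}$.
\item For the face property, suppose $\eta=\lambda\eta_1+(1-\lambda)\eta_2\in K$ with $\eta_1,\eta_2\in B(E^*)$ and $0<\lambda<1$.
\item For each $m\in M$ we have $1=\lambda\eta_1(m)+(1-\lambda)\eta_2(m)$, while $\eta_i(m)\le1$. This forces $\eta_1(m)=\eta_2(m)=1$, so $\eta_1,\eta_2\in K$.
\end{itemize}
By the Banach--Alaoglu theorem, $K$ is weak$^*$ compact. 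The Krein--Milman theorem then gives an extreme point $\xi$ of $K$. Because $K$ is a face of $B(E^*)$, $\xi$ is also extreme in $B(E^*)$, that is, $\xi\in\ext{B(E^*)}$.

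Finally, set $F=\xi^{-1}(\set{1})\cap S(E)$. Since $\xi\in K$, we have $M\subset F$. The set $F$ is convex: if $f,g\in F$ and $0\le\lambda\le1$, then $\xi(\lambda f+(1-\lambda)g)=1$, so $\norm{\lambda f+(1-\lambda)g}\ge1$. Convexity of the norm gives the reverse inequality, so $\lambda f+(1-\lambda)g\in S(E)$. Thus $F$ is a convex subset of $S(E)$ containing $M$, and maximality of $M$ yields $M=F$.

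No step is genuinely hard. The only point needing care is the face argument, which ensures that an extreme point of $K$ is extreme in all of $B(E^*)$.
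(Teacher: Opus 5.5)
Your argument is correct and is essentially the standard proof of this lemma; the paper does not reprove it and simply cites Hatori, Oi and Shindo-Togashi. The standard argument, which you reproduce, runs as follows: Hahn--Banach separation of $M$ from the open unit ball gives a norm-one functional equal to $1$ on $M$; Krein--Milman applied to the weak$^*$-compact face $K$ of all such functionals gives $\xi\in\mathrm{ext}(B(E^*))$; and maximality of $M$ forces $M=\xi^{-1}(\{1\})\cap S(E)$, with the face property and the convexity of $\xi^{-1}(\{1\})\cap S(E)$ both correctly verified.
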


The point evaluation at $x\in L$ is the map $\delta_x\colon \cl\to\R$ defined by $\delta_x(f)=f(x)$ for all $f\in\cl$. 
It is well-known that 
\begin{equation}\label{ext}
    \ext{B(\cl^*)}=\set{t\delta_x:(t,x)\in\set{-1, 1}\times L}, 
\end{equation}
(see \cite[Proposition~4.4.15]{DDLS}). 
With the aid of Lemma~\ref{lemHOM}, we see that every maximal convex subset of $S(\cl)$ is of the form $tM_x$ as follows:

\begin{lem}\label{Mchara}\label{HOM'}
	A subset $M$ of $S(\cl)$ is a maximal convex subset if and only if $M = t M_x$ for some $(t, x) \in \set{-1,1} \times L$. 
\end{lem}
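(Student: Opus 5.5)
The argument has two directions: necessity via Lemma~\ref{lemHOM} and \eqref{ext}, sufficiency via Lemma~\ref{inclusion}.

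\emph{Necessity.} Let $M$ be a maximal convex subset of $\sol$. By Lemma~\ref{lemHOM} there is $\xi\in\ext{B(\cl^*)}$ with $M=\xi^{-1}(\set{1})\cap\sol$. By \eqref{ext}, $\xi=t\delta_x$ for some $(t,x)\in\set{-1,1}\times L$. Since $t^2=1$,
\[
M=\set{f\in\sol : tf(x)=1}=\set{f\in\sol : f(x)=t}=tM_x .
\]

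\emph{Sufficiency.} Fix $(t,x)$. As noted before Lemma~\ref{inclusion}, $tM_x$ is a convex subset of $\sol$. It is nonempty: Urysohn's lemma for locally compact Hausdorff spaces gives a compactly supported continuous $h\colon L\to[0,1]$ with $h(x)=1$, and then $th\in tM_x$.

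A standard Zorn's lemma argument shows that every convex subset of $\sol$ lies in a maximal one: the union of a chain of convex subsets of $\sol$ is again a convex subset of $\sol$. So take a maximal convex subset $M\supset tM_x$. By the necessity part, $M=sM_y$ for some $(s,y)$. Then $tM_x\subset sM_y$, and Lemma~\ref{inclusion} gives $(t,x)=(s,y)$. Hence $tM_x=M$ is maximal.

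\emph{Main obstacle.} The only delicate point is the Zorn's lemma step, namely that every convex subset of the sphere is contained in a maximal convex subset. This should be stated explicitly. Convexity is preserved under unions of chains, because any two points of the union lie in a common member of the chain. The union also stays inside $\sol$, so Zorn's lemma applies. Everything else follows directly from Lemmas~\ref{lemHOM} and \ref{inclusion}.
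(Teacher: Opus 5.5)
Your proposal is correct and takes essentially the same route as the paper: necessity comes from Lemma~\ref{lemHOM} together with \eqref{ext}, and sufficiency from Zorn's lemma combined with Lemma~\ref{inclusion}. Your explicit justification of the Zorn step (unions of chains stay convex and inside the sphere) and of the non-emptiness of $tM_x$ just fills in details the paper leaves implicit.
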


\begin{proof}
Assume that $M$ is a maximal convex subset of $S(\cl)$. 
From Lemma~\ref{lemHOM}, there exists $\xi \in \ext{B(\cl^*)}$ such that $M = \xi^{-1}(\set{1}) \cap S(\cl)$. 
By \eqref{ext}, we can find some $(t, x)\in \set{-1,1} \times L$ such that $\xi = t \d_x$, and hence 
\begin{align*}
M = (t \d_x)^{-1}(\set{1}) \cap S(\cl) = \set{f \in S(\cl) : f(x) = t} = t M_x. 
\end{align*}

Conversely, fix $(t, x) \in \set{-1,1} \times L$. 
We show that $t M_x$ is a maximal convex subset of $S(\cl)$. 
Since $t M_x$ is a convex subset of $S(\cl)$, Zorn's lemma guarantees the existence of a maximal convex subset $M'$ of $S(\cl)$ such that $t M_x \subset M'$. 
By the previous paragraph, we can write $M' = s M_y$ for some $(s, y) \in \set{-1,1}\times L$. 
Since $tM_x\subset M' = sM_y$, it follows from Lemma~\ref{inclusion} that $(t, x)=(s, y)$, and hence $t M_x = s M_y = M'$. 
This proves that $t M_x$ is a maximal convex subset of $S(\cl)$. 
\end{proof}

Here we present two elementary propositions.
The former is routine, and the later follows from a standard application of Urysohn's lemma.
So we omit the proofs.
\begin{prop}\label{ConvNorm}
Let $M$ be a convex subset of the unit sphere $S(E)$ of a Banach space $E$ and let $f,g\in M$. 
Then 
$\norm{f + g} = 2$.
\end{prop}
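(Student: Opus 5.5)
The statement of Proposition~\ref{ConvNorm} is that two elements $f,g$ of a convex subset $M\subset S(E)$ satisfy $\norm{f+g}=2$. The plan is to use convexity of $M$ once and then squeeze $\norm{f+g}$ between two bounds.

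For the lower bound, I use that $M$ is convex and $f,g\in M$, so the midpoint $\frac{1}{2}(f+g)$ belongs to $M$. Since $M\subset S(E)$, this gives
\begin{equation*}
\norm{\tfrac{1}{2}(f+g)}=1,
\end{equation*}
and hence $\norm{f+g}=2$ directly.

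For the upper bound, which confirms the value, the triangle inequality gives $\norm{f+g}\le\norm{f}+\norm{g}=2$, because $f,g\in S(E)$.

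I do not expect any real obstacle. The only point needing care is that convexity of $M$ is taken within $E$, that is, the segment $[f,g]$ lies in $M$ and therefore in $S(E)$; this is precisely what is meant by a convex subset of the unit sphere. If one wanted more, the same argument shows that $\norm{\lambda f+(1-\lambda)g}=1$ for every $\lambda\in[0,1]$, but only the midpoint case is needed here.
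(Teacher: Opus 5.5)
Your argument is correct and is essentially the routine proof the paper omits: the midpoint $\frac{1}{2}(f+g)$ lies in $M \subset S(E)$, so $\norm{f+g}=2$. Your triangle-inequality ``upper bound'' is harmless but redundant, since the midpoint step already gives equality on its own.
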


% \begin{proof}
% Fix $f, g \in M$. 
% The convexity of $M$ shows that $(f + g) /2 \in M$. 
% Since $M$ is a subset of $S(E)$, we see that $\norm{f + g} = 2$, as desired.
% \end{proof}

\begin{prop}\label{Urysohn}
    Let $n\in\N$ and let $(t_i,x_i)\in\set{-1,1}\times L$ for $i=1,\dots, n$. 
    If $x_i\neq x_j$ for every $i\neq j$, then $\bigcap_{i=1}^nt_iM_{x_i}$ is non-empty. 
\end{prop}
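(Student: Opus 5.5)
We have to prove Proposition~\ref{Urysohn}: for distinct points $x_1,\dots,x_n$ of $L$ and signs $t_1,\dots,t_n$, construct $f\in S(\cl)$ with $f(x_i)=t_i$ for every $i$. The plan is to build $f$ as a finite sum of Urysohn bump functions with pairwise disjoint supports.

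\textbf{Step 1: separate the points.} Since $L$ is Hausdorff and the $x_i$ are finitely many and distinct, we choose pairwise disjoint open sets $U_1,\dots,U_n$ with $x_i\in U_i$. For $n$ points this is done by choosing, for each pair $i\neq j$, disjoint open neighbourhoods $V_{ij}\ni x_i$ and $V_{ji}\ni x_j$, and setting $U_i=\bigcap_{j\neq i}V_{ij}$. Then for $i\neq j$ we have $U_i\cap U_j\subset V_{ij}\cap V_{ji}=\emptyset$.

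\textbf{Step 2: build compactly supported bumps.} The locally compact form of Urysohn's lemma says: if $K\subset U$ with $K$ compact and $U$ open, there is a continuous $\varphi\colon L\to[0,1]$ with compact support contained in $U$ and $\varphi\equiv1$ on $K$. We apply it with $K=\set{x_i}$ and $U=U_i$ to get $\varphi_i\colon L\to[0,1]$ with $\varphi_i(x_i)=1$ and $\operatorname{supp}\varphi_i\subset U_i$ compact. Each $\varphi_i$ then lies in $\cl$.

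\textbf{Step 3: assemble $f$.} We set $f=\sum_{i=1}^n t_i\varphi_i$. This $f$ is continuous and compactly supported, so $f\in\cl$. The supports are pairwise disjoint, so at each point of $L$ at most one summand is nonzero. Hence $|f|\le1$ everywhere and $f(x_i)=t_i\varphi_i(x_i)=t_i$. Since $|f(x_1)|=1$, we get $\norm{f}=1$, so $f\in S(\cl)$ and $f\in\bigcap_{i=1}^n t_iM_{x_i}$.

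There is no real obstacle here. The only point needing care is to use the locally compact version of Urysohn's lemma, which is what guarantees that the bumps vanish at infinity, together with the disjointness of supports, which keeps the sup-norm at exactly $1$.
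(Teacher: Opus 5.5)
Your proof is correct and is the standard Urysohn-lemma argument the paper has in mind; the paper omits the proof, saying only that it follows from a standard application of Urysohn's lemma. Your version fills that in completely: pairwise disjoint neighbourhoods $U_i$, compactly supported bumps $\varphi_i$ with $\varphi_i(x_i)=1$, and $f=\sum_{i=1}^n t_i\varphi_i$, whose disjoint supports give $\norm{f}=1$ and $f(x_i)=t_i$.
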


\section{Rearrangement of $T$}
In the rest of this paper, let $X$ and $Y$ be locally compact Hausdorff spaces and assume that $T \colon \sx \to \sy$ is a surjective phase-isometry. 
By Lemma~\ref{HOM'}, every maximal convex subset of the unit sphere $\sx$ of $\cx$ can be expressed as $t M_x$ for some $(t, x) \in \set{-1,1} \times X$. 
In the same way, those of $\sy$ would also be denoted $s M_q$ for some $(s,q)\in \set{-1,1}\times Y$. 
However, to avoid confusion in notation, we use the symbol $s N_q$ instead of $s M_q$. 

By using the correspondence between the maximal convex subsets of $\sx$ and $\sy$, we obtain a suitable bijection between underlying spaces $X$ and $Y$.
\begin{lem}\label{lemtau}
There exists a bijection $\tau \colon X \to Y$ such that 
\begin{align}\label{eqtau}
T(M_x \cup -M_x) = N_{\tau(x)} \cup -N_{\tau(x)}
\end{align}
for every $x \in  X$. 
\end{lem}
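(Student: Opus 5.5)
We construct $\tau$ from Lemma~\ref{lemTZH} and Lemma~\ref{HOM'}, show it is well defined and injective by Corollary~\ref{inclusion2}, and obtain surjectivity by applying the same construction to $T^{-1}$.

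\emph{Definition of $\tau$.} Fix $x\in X$. By Lemma~\ref{HOM'}, $M_x$ is a maximal convex subset of $\sx$. Lemma~\ref{lemTZH} then gives a maximal convex subset $N$ of $\sy$ with $T(M_x\cup -M_x)=N\cup -N$. By Lemma~\ref{HOM'} applied to $Y$, we have $N=sN_q$ for some $(s,q)\in\set{-1,1}\times Y$, so $N\cup -N=N_q\cup -N_q$.

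\emph{Well-definedness.} The point $q$ is unique. If $N_q\cup -N_q=N_{q'}\cup -N_{q'}$, then each union is contained in the other, and Corollary~\ref{inclusion2} (for $L=Y$) forces $q=q'$. Hence $\tau(x):=q$ is a well-defined map $X\to Y$ satisfying \eqref{eqtau}.

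\emph{Injectivity.} Suppose $\tau(x)=\tau(y)$. Then $T(M_x\cup -M_x)=T(M_y\cup -M_y)$. Since $T$ is injective by Lemma~\ref{lem2.0}, $M_x\cup -M_x=M_y\cup -M_y$, and Corollary~\ref{inclusion2} (for $L=X$) gives $x=y$.

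\emph{Surjectivity.} By Lemma~\ref{lem2.0}, $T^{-1}\colon\sy\to\sx$ is also a surjective phase-isometry. Fix $q\in Y$. Applying Lemma~\ref{lemTZH} to $T^{-1}$ and the maximal convex set $N_q$, and then Lemma~\ref{HOM'} for $X$, we find $x\in X$ with
\[
T^{-1}(N_q\cup -N_q)=M_x\cup -M_x.
\]
Applying $T$ and using bijectivity,
\[
N_q\cup -N_q=T(M_x\cup -M_x)=N_{\tau(x)}\cup -N_{\tau(x)}.
\]
Corollary~\ref{inclusion2} then gives $q=\tau(x)$. So $\tau$ is a bijection.

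There is no serious obstacle; all the substance sits in Lemma~\ref{lemTZH}. The only point needing care is passing from equality of the sets $N\cup -N$ to equality of the points, and Corollary~\ref{inclusion2} handles exactly this.
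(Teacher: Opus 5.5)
Your proof is correct and follows essentially the paper's argument: define $\tau$ via Lemma~\ref{lemTZH} and Lemma~\ref{HOM'}, get uniqueness of $\tau(x)$ from Corollary~\ref{inclusion2}, and use the surjective phase-isometry $T^{-1}$ to obtain bijectivity. The only cosmetic difference is that the paper builds the map $\sigma$ from $T^{-1}$ and checks that $\sigma\circ\tau$ and $\tau\circ\sigma$ are identities, whereas you prove injectivity directly from the injectivity of $T$ and use $T^{-1}$ only for surjectivity.
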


\begin{proof}
Fix $x \in X$. 
We note that $M_x$ is a maximal convex subset of $\sx$ by Lemma~\ref{Mchara}. 
Applying Lemma~\ref{lemTZH} to $M_x$, we can find some maximal convex subset $N$ of $\sy$ such that 
$T(M_x \cup -M_x) = N \cup -N$.
By applying Lemma~\ref{Mchara} to $N$, there exists $\tau(x)\in Y$ such that $N \cup -N=N_{\tau(x)}\cup -N_{\tau(x)}$, and thus 
\begin{equation*}
    T(M_x \cup -M_x)=N_{\tau(x)} \cup -N_{\tau(x)}. 
\end{equation*}  
The point $\tau(x)$ is unique by Corollary~\ref{inclusion2}. 
Thus we obtain a map $\tau \colon X \to Y$ with \eqref{eqtau}.

We show the bijectivity of $\tau$. 
Note that the inverse map $T^{-1}$ of $T$ is also a surjective phase-isometry by Lemma~\ref{lem2.0}.
Then the same argument as in the previous paragraph guarantees the existence of a map $\sigma \colon Y \to X$ satisfying 
\begin{align*}
T^{-1}(N_q \cup -N_q) = M_{\sigma(q)} \cup -M_{\sigma(q)}
\end{align*}
for every $q \in Y$. 
Hence, for each $x \in X$, we deduce from \eqref{eqtau} that
\begin{align*}
M_x \cup -M_x 
& = T^{- 1} (T(M_x\cup -M_x)) 
= T^{- 1}(N_{\tau(x)}\cup -N_{\tau(x)}) \\
& = M_{\sigma(\tau(x))}\cup -M_{\sigma(\tau(x))}. 
\end{align*}
Corollary~\ref{inclusion2} gives $x = \sigma(\tau(x))$ for every $x\in X$. 
Interchanging the role of $T$ and $T^{- 1}$, we also see that $q = \tau(\sigma(q))$ for every $q\in Y$. 
Hence we conclude that $\tau$ is a bijection. 
\end{proof}

The map $T$ preserves the union $M_x \cup -M_x$ in the sense of \eqref{eqtau}. 
However, in general, it is not necessarily true that either $T(M_x) = N_{\tau(x)}$ or $T(M_x) = -N_{\tau(x)}$ holds. 
Nevertheless, by rearranging $T$, we will obtain a surjective phase-isometry $\Phi \colon \sx \to \sy$ with the following property: for every $x \in X$, either $\Phi(M_x) = N_{\tau(x)}$ or $\Phi(M_x) = -N_{\tau(x)}$. 
In what follows, we present some definitions and lemmas 
to define such a rearrangement of $T$. 

Let $x \in X$. 
We first construct a surjective phase-isometry $\Phi_x \colon \sx \to \sy$ which is phase-equivalent to $T$ and satisfies $\Phi_x(M_x) = N_{\tau(x)}$ and $\Phi_x(-M_x)=-N_{\tau(x)}$. 
Fix $f\in M_x\cup -M_x$.
Then there exists a unique $t\in \set{-1,1}$ such that $f\in tM_x$ since $M_x\cap -M_x=\emptyset$. 
Moreover, it follows from Lemma~\ref{lemtau} that $T(f)\in N_{\tau(x)}\cup -N_{\tau(x)}$. 
Hence there also exits a unique $s\in \set{-1,1}$ such that $T(f)\in sN_{\tau(x)}$. 
Thus, for every $f\in M_x\cup -M_x$, there corresponds a unique pair $(t,s)\in\set{-1,1}\times \set{-1,1}$ with $f\in tM_x$ and $ T(f)\in sN_{\tau(x)}$. 
Having in mind this correspondence, we define two maps $\theta_x$ and $\Phi_x$ as follows: 

\begin{defn}\label{def0}
Fix $x\in X$. 
For every $f\in M_x \cup -M_x$, there exists a unique pair $(t,s)\in\set{-1,1}\times \set{-1,1}$ with $f\in tM_x$ and $ T(f)\in sN_{\tau(x)}$. 
We set 
\begin{equation*}
    \theta_x(f)=ts\q (f\in M_x \cup -M_x).
\end{equation*} 
For every $f\notin M_x\cup -M_x$, we set $\theta_x(f)=1$.
We define a map $\theta_x\colon\sx\to\set{-1,1}$ by assigning $\theta_x(f)\in\set{-1,1}$ to each $f\in \sx$. 

We also define $\Phi_x\colon \sx\to \sy$ by 
\begin{equation*}
    \Phi_x(f)=\theta_x(f)T(f)
\end{equation*}
for every $f\in\sx$.
\end{defn}

We observe a property of the map $\theta_x$. 
Let $f\in M_x\cup -M_x$.
Then there exists a unique pair $(t,s)\in \set{-1,1}\times \set{-1,1}$ such that $f\in tM_x$ and $T(f)\in sN_{\tau(x)}$. 
Thus $\theta_x(f)=ts$ by the definition of $\theta_x$.
It follows from  Lemma~\ref{lem2.0} that $-f\in-tM_x$ and $T(-f)=-T(f)\in -sN_{\tau(x)}$. 
Then the definition of $\theta_x$ implies that $\theta_x(-f)=(-t)(-s)=\theta_x(f)$.
If $f\notin M_x\cup -M_x$, then we have $-f\notin M_x\cup -M_x$, which implies that $\theta_x(f)=1=\theta_x(-f)$.
Thus we obtain 
\begin{equation}\label{thetax}
    \theta_x(f)=\theta_x(-f)
\end{equation} 
for every $f\in \sx$.

The next lemma establishes that the map $\Phi_x$ indeed has desired properties.

\begin{lem}\label{pmax}
For each $x \in X$, the map $\Phi_x \colon \sx \to \sy$ is a surjective phase-isometry which is phase-equivalent to $T$ and satisfies $\Phi_x(M_x) = N_{\tau(x)}$ and $\Phi_x (-M_x) = -N_{\tau(x)}$. 
Moreover, for every $x,y\in X$, $\Phi_x$ and $\Phi_y$ are phase-equivalent and $\Phi_x(M_y \cup -M_y)=N_{\tau(y)} \cup -N_{\tau(y)}$ holds. 
\end{lem}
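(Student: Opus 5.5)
The first assertions follow from Lemma~\ref{piso} together with \eqref{thetax}. Since $\theta_x(f)=\theta_x(-f)$ for every $f\in\sx$, Lemma~\ref{piso} applied to $T$ and $\theta_x$ shows that $\Phi_x=\theta_x T$ is a surjective phase-isometry. By definition $\Phi_x\simeq T$. Because $\simeq$ is an equivalence relation, $\Phi_x\simeq T\simeq\Phi_y$ for all $x,y\in X$, which gives the phase-equivalence of $\Phi_x$ and $\Phi_y$.

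Next I check $\Phi_x(M_x)=N_{\tau(x)}$. For $f\in M_x$ (so $t=1$), with $T(f)\in sN_{\tau(x)}$, we have $\theta_x(f)=s$ and hence $\Phi_x(f)=sT(f)\in s\cdot sN_{\tau(x)}=N_{\tau(x)}$. Similarly, for $f\in -M_x$ we get $\Phi_x(f)=-sT(f)\in -N_{\tau(x)}$. This gives $\Phi_x(M_x)\subset N_{\tau(x)}$ and $\Phi_x(-M_x)\subset -N_{\tau(x)}$.

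For the reverse inclusions, take $u\in N_{\tau(x)}$. By \eqref{eqtau} there is $f\in M_x\cup -M_x$ with $T(f)=u$. Say $f\in tM_x$; then $s=1$ and $\theta_x(f)=t$. Put $g=tf\in M_x$. By Lemma~\ref{lem2.0}, $T(g)=tT(f)=tu\in tN_{\tau(x)}$, so $\theta_x(g)=1\cdot t=t$. Therefore $\Phi_x(g)=t\cdot tu=u$, and $u\in\Phi_x(M_x)$. The case $u\in-N_{\tau(x)}$ follows the same way, or by applying $\Phi_x(-f)=-\Phi_x(f)$, which holds by \eqref{thetax} and Lemma~\ref{lem2.0}.

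Finally, for $y\in X$ I use that $\Phi_x(f)=\pm T(f)$ for every $f$. The set $N_{\tau(y)}\cup -N_{\tau(y)}$ is symmetric, so $\Phi_x(f)\in N_{\tau(y)}\cup-N_{\tau(y)}$ if and only if $T(f)\in N_{\tau(y)}\cup-N_{\tau(y)}$. Hence $\Phi_x(M_y\cup-M_y)\subset N_{\tau(y)}\cup -N_{\tau(y)}$ by \eqref{eqtau}. Conversely, given $u$ in this union, pick $f\in M_y\cup -M_y$ with $T(f)=u$. Then $\Phi_x(f)=\pm u$. If the sign is $-1$, replace $f$ by $-f$, which still lies in $M_y\cup-M_y$; using $\theta_x(-f)=\theta_x(f)$ and $T(-f)=-T(f)$ we get $\Phi_x(-f)=-\Phi_x(f)=u$. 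The only real care needed in the whole argument is this sign bookkeeping in the surjectivity parts; no genuine obstacle is expected.
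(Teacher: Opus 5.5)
Your proof is correct and follows essentially the paper's argument: Lemma~\ref{piso} with \eqref{thetax} gives the first claims, direct sign bookkeeping gives $\Phi_x(\pm M_x)=\pm N_{\tau(x)}$, and replacing $f$ by $-f$ (the paper's $c f_0$ trick) gives the reverse inclusions. The only cosmetic difference is in the last claim: you argue directly from $\Phi_x=\theta_x T$ and \eqref{eqtau}, whereas the paper writes $\Phi_x=\theta\Phi_y$ and uses the already-established identity $\Phi_y(M_y\cup -M_y)=N_{\tau(y)}\cup -N_{\tau(y)}$; both routes are equally valid.
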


\begin{proof}
For every $x \in X$, it follows from the definition that  $\Phi_x = \theta_x T$, and thus it is obvious that $\Phi_x\simeq T$.
Hence, for every $x,y\in X$, we have $\Phi_x\simeq \Phi_y$ since $\simeq$ is an equivalence relation.  
Moreover, having in mind the definition of $\Phi_x$ and \eqref{thetax}, Lemma~\ref{piso} implies that $\Phi_x$ is a surjective phase-isometry. 

We prove that $\Phi_x(M_x)=N_{\tau(x)}$.
Let $f\in M_x$. 
Then we have $T(f)\in sN_{\tau(x)}$ for some $s\in \set{-1,1}$, which implies that $\theta_x(f)=s$.
The definition of $\Phi_x$ implies that 
\begin{equation*}
    \Phi_x(f)=\theta_x(f) T(f)=s T(f)\in N_{\tau(x)}.
\end{equation*}
Hence $\Phi_x(M_x)\subset N_{\tau(x)}$.
To prove the reverse inclusion $\Phi_x(M_x) \supset N_{\tau(x)}$, choose $u\in N_{\tau(x)}$ arbitrarily.
Then, by Lemma~\ref{lemtau}, there exists $f\in M_x\cup -M_x$ such that $T(f)=u$. 
Since $M_x\cap -M_x=\emptyset$, we have $f\in tM_x$ for some $t\in\set{-1,1}$, and thus $\theta_x(f)=t$.
Hence we obtain 
\begin{equation*}
    \Phi_x(f)=\theta_x(f)T(f)=tT(f)=tu,
\end{equation*} 
which implies that $\Phi_x(tf)=u$ by Lemma~\ref{lem2.0}.
It follows from $f\in tM_x$ that $tf\in M_x$, and thus $\Phi_x(M_x)\supset N_{\tau(x)}$.
Hence $\Phi_x(M_x)=N_{\tau(x)}$. 
By Lemma~\ref{lem2.0}, we obtain $\Phi_x(-M_x)=-\Phi_x(M_x)=-N_{\tau(x)}$.

% We prove that $\Phi_x(M_x) = N_{\tau(x)}$ and $\Phi_x (-M_x) = -N_{\tau(x)}$.  
% If $f \in M_x$, then we have $\Phi_x(f) = \theta_x(f) T(f) \in N_{\tau(x)}$ by the definition of $\theta_x$, and thus $\Phi_x(M_x) \subset N_{\tau(x)}$. 
% Lemma~\ref{lem2.0} shows that  $\Phi_x(-M_x)\subset - N_{\tau(x)}$. 
% To prove the reverse inclusion $\Phi_x(M_x) \supset N_{\tau(x)}$, choose $u\in N_{\tau(x)}$ arbitrarily. 
% Since $\Phi_x$ is a surjective phase-isometry, we deduce from Lemma~\ref{lemtau} that $\Phi_x(f) = u$ for some $f \in M_x \cup -M_x$. 
% If we had $f \in -M_x$, then we would have $u = \Phi_x(f) \in \Phi_x(-M_x) \subset -N_{\tau(x)}$, which contradicts $N_{\tau(x)} \cap -N_{\tau(x)} = \emptyset$. 
% Hence we obtain $\Phi_x(M_x) \supset N_{\tau(x)}$. 
% Lemma~\ref{lem2.0} shows that $\Phi_x(-M_x) \supset - N_{\tau(x)}$. 
% We thus conclude that $\Phi_x$ satisfies $\Phi_x(M_x) = N_{\tau(x)}$ and $\Phi_x (-M_x) = -N_{\tau(x)}$.

It remains to show that $\Phi_x(M_y\cup -M_y)=N_{\tau(y)}\cup -N_{\tau(y)}$.  
By the first paragraph, we see that $\Phi_x \simeq \Phi_y$, and thus there exists a function $\theta \colon \sx \to \set{-1, 1}$ such that $\Phi_x = \theta \Phi_y$. 
The previous paragraph yields $\Phi_y(M_y \cup -M_y) = N_{\tau(y)} \cup -N_{\tau(y)}$. 
Hence, for every $f \in M_y\cup -M_y$, we have  
\begin{align*}
\Phi_x(f) = \theta(f) \Phi_y(f)\in 
\theta(f) (N_{\tau(y)} \cup -N_{\tau(y)}) 
= N_{\tau(y)} \cup -N_{\tau(y)}, 
\end{align*}
which proves $\Phi_x(M_y \cup -M_y) \subset N_{\tau(y)} \cup -N_{\tau(y)}$.  
Conversely, choose $u\in N_{\tau(y)} \cup -N_{\tau(y)}$. 
By the previous paragraph, there exists $f_0\in M_y\cup -M_y$ such that $\Phi_y(f_0)=u$. 
Set $c=\theta(f_0)$.
By the choice of the map $\theta$ and Lemma~\ref{lem2.0}, we have $\Phi_x(cf_0)=c\Phi_x(f_0)=\Phi_y(f_0)=u$. 
It follows from $c\in\set{-1,1}$ that $cf_0\in M_y\cup -M_y$.
Thus we conclude that$\Phi_x(M_y \cup -M_y) \supset N_{\tau(y)} \cup -N_{\tau(y)}$, which implies that $\Phi_x(M_y\cup -M_y)=N_{\tau(y)}\cup -N_{\tau(y)}$, as claimed. 
\end{proof}

By gluing the surjective phase-isometries $\Phi_x$, we will define a surjective phase-isometry $\Phi \colon \sx \to \sy$ which is phase-equivalent to $T$ and satisfies that, for each $x\in X$, either $\Phi(M_x) = N_{\tau(x)}$ or $\Phi(M_x) = -N_{\tau(x)}$. 
More explicitly, we will construct a suitable function $\rho \colon X \to \set{-1,1}$ and define $\Phi \colon \sx \to \sy$ by $\Phi(f) = \rho(x) \Phi_x(f)$ on  each $M_x \cup -M_x$. 
Since this definition depends on the choice of the maximal convex subset to which $f$ belongs, we need to show that $\Phi$ is well-defined. 
Hence we shall prove several lemmas to ensure 
the well-definedness of the rearrangement $\Phi$ of $T$.

\begin{lem}\label{lem3.17}
Let $(t, x), (s, y) \in \set{-1, 1}\times X$ with $t M_x \cap s M_y \ne \emptyset$. 
Then either $\Phi_x(t M_x \cap s M_y) \subset N_{\tau(y)}$ or $\Phi_x(t M_x \cap s M_y) \subset -N_{\tau(y)}$ holds. 
\end{lem}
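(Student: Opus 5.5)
Set $K = tM_x \cap sM_y$. It is nonempty by assumption and is an intersection of two convex sets, hence convex. By Lemma~\ref{pmax}, $\Phi_x(M_y\cup -M_y)=N_{\tau(y)}\cup -N_{\tau(y)}$, so $\Phi_x(K)\subset N_{\tau(y)}\cup -N_{\tau(y)}$. The plan is to show that $\Phi_x(K)$ cannot meet both $N_{\tau(y)}$ and $-N_{\tau(y)}$. Suppose instead that there are $f,g\in K$ with $\Phi_x(f)\in N_{\tau(y)}$ and $\Phi_x(g)\in -N_{\tau(y)}$; we aim for a contradiction.

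The case $x=y$ is immediate. Then $t=s$ and $K=tM_x$. By Lemma~\ref{pmax}, $\Phi_x(tM_x)=tN_{\tau(x)}$, which lies entirely in $N_{\tau(x)}$ if $t=1$ and entirely in $-N_{\tau(x)}$ if $t=-1$.

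Now assume $x\neq y$. Since $f,g\in K$ and $K$ is convex, Proposition~\ref{ConvNorm} gives $\norm{f+g}=2$. Because $\Phi_x$ is a phase-isometry, $2\in\set{\norm{\Phi_x(f)+\Phi_x(g)},\norm{\Phi_x(f)-\Phi_x(g)}}$.
\begin{itemize}
\item We have $\Phi_x(f)(\tau(y))=1$ and $\Phi_x(g)(\tau(y))=-1$, so $\norm{\Phi_x(f)-\Phi_x(g)}=2$ automatically. This alone gives no contradiction.
\item To get one, we need the \emph{other} norm to be $2$ as well, and then contradict that using the $x$-coordinate.
\item Both $f,g$ lie in $tM_x$, so $\Phi_x(f),\Phi_x(g)\in tN_{\tau(x)}$. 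Hence $\Phi_x(f)+\Phi_x(g)$ takes the value $2t$ at $\tau(x)$, and so $\norm{\Phi_x(f)+\Phi_x(g)}=2$ too.
\end{itemize}
So both norms of the images equal $2$, and the phase-isometry property forces $\norm{f-g}=2$ as well. The contradiction must come from the domain side: we want elements of $K$ whose difference has norm less than $2$.

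The main obstacle is that arbitrary $f,g\in K$ can have $\norm{f-g}=2$, for instance when they take opposite values $\pm1$ at some third point. I will handle this with connectedness instead of a single pair.
\begin{itemize}
\item The set $K$ is convex, hence connected.
\item Let $A=\Phi_x^{-1}(N_{\tau(y)})\cap K$ and $B=\Phi_x^{-1}(-N_{\tau(y)})\cap K$. These partition $K$, and both are nonempty by the assumption above.
\item For $f\in A$ and $g\in B$, the argument above shows $\norm{f-g}=2$. So $A$ and $B$ are at distance $2$ from each other, in particular positive distance.
\item Therefore $A$ and $B$ are relatively open in $K$, which contradicts the connectedness of $K$.
\end{itemize}
Concretely, one can take the segment from $f$ to $g$. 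It lies in $K$ and has points arbitrarily close to each other on both sides of any split, whereas every point of $A$ is at distance $2$ from every point of $B$. This contradiction shows that $\Phi_x(K)$ lies entirely in $N_{\tau(y)}$ or entirely in $-N_{\tau(y)}$.
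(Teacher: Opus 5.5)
Your proof is correct and is essentially the paper's argument. The paper uses the same two facts, namely $\norm{f+g}=2$ from Proposition~\ref{ConvNorm} and $\norm{\Phi_x(f)+\Phi_x(g)}=2$ from the common value $t$ at $\tau(x)$, to show that $\Phi_x$ is isometric (hence continuous) on $tM_x$, and then pushes the connectedness of the convex set $tM_x\cap sM_y$ forward into the disjoint closed sets $\pm N_{\tau(y)}$; you instead pull the separation back to the domain (points of $A$ and $B$ are at distance $2$), which is the same reasoning in contrapositive form, and your separate treatment of the case $x=y$ is harmless but not needed.
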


\begin{proof}
Fix $(t, x), (s, y) \in \set{-1, 1}\times X$ with $t M_x \cap s M_y \ne \emptyset$. 
Since $tM_x\cap sM_y\subset M_y\cup -M_y$, the second part of Lemma~\ref{pmax} implies that 
\begin{equation*}
    \Phi_x(tM_x \cap sM_y)\subset N_{\tau(y)} \cup -N_{\tau(y)}.
\end{equation*}
We shall show that $\Phi_x$ is continuous on $tM_x$. 
Fix $f,g\in tM_x$.
Then $\norm{f+g}=2$ by Proposition~\ref{ConvNorm}. 
It follows from Lemma~\ref{pmax} that $\Phi_x(f),\Phi_x(g)\in tN_{\tau(x)}$.
Thus $\norm{\Phi_x(f)+\Phi_x(g)}=2$ by Proposition~\ref{ConvNorm}. 
Since $\Phi_x$ is a phase-isometry, we must have $\norm{\Phi_x(f)-\Phi_x(g)}=\norm{f-g}$.  
Hence $\Phi_x$ is continuous on $tM_x$. 
Since $tM_x\cap sM_y$ is convex, it must be connected.
Thus the continuity of $\Phi_x$ implies that the set $\Phi_x(tM_x\cap sM_y)$ is also connected.   
Having in mind that $N_{\tau(y)}$ and $-N_{\tau(y)}$ are closed and disjoint, we must have either $\Phi_x(tM_x\cap sM_y)\subset N_{\tau(y)}$ or $\Phi_x(tM_x\cap sM_y)\subset -N_{\tau(y)}$. 
\end{proof}

\begin{lem}\label{lem3.18}
For each $x,y \in X$ and each $s\in\set{-1,1}$, there exists a unique $r\in \set{-1, 1}$ such that 
\begin{align}\label{uniquer}
\Phi_x(M_x \cap sM_y) \cup \Phi_x(-M_x \cap sM_y) \subset r N_{\tau(y)}.
\end{align}
\end{lem}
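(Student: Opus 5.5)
The plan is to treat the cases $x=y$ and $x\neq y$ separately. In both, uniqueness of $r$ is easy once existence is shown. The left-hand side of \eqref{uniquer} is non-empty: when $x\neq y$ both pieces are non-empty by Proposition~\ref{Urysohn}, and when $x=y$ exactly one piece is non-empty. Since $N_{\tau(y)}\cap -N_{\tau(y)}=\emptyset$, a non-empty set cannot lie in both $N_{\tau(y)}$ and $-N_{\tau(y)}$. So everything reduces to existence.

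\emph{Case $x=y$.} If $s=1$, then $-M_x\cap M_x=\emptyset$ and $M_x\cap M_x=M_x$. By Lemma~\ref{pmax}, $\Phi_x(M_x)=N_{\tau(x)}$, so $r=1$ works. If $s=-1$, then $M_x\cap -M_x=\emptyset$ and $\Phi_x(-M_x)=-N_{\tau(x)}$, so $r=-1$ works.

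\emph{Case $x\neq y$.} Both $M_x\cap sM_y$ and $-M_x\cap sM_y$ are non-empty, so Lemma~\ref{lem3.17} gives $r_1,r_2\in\set{-1,1}$ with
\begin{equation*}
\Phi_x(M_x\cap sM_y)\subset r_1N_{\tau(y)}, \qquad \Phi_x(-M_x\cap sM_y)\subset r_2N_{\tau(y)} .
\end{equation*}
The task is to show $r_1=r_2$. First, using $\Phi_x(-f)=-\Phi_x(f)$ (Lemma~\ref{lem2.0}) and $-(-M_x\cap sM_y)=M_x\cap -sM_y$, we get $\Phi_x(M_x\cap -sM_y)\subset -r_2N_{\tau(y)}$. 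So the claim is equivalent to the following statement about the face $M_x$: for $f\in M_x$, the sign of $\Phi_x(f)(\tau(y))$ flips when the value $f(y)$ changes from $s$ to $-s$. In the proof of Lemma~\ref{lem3.17} we saw that $\Phi_x$ restricted to $M_x$ is a distance-preserving map into $N_{\tau(x)}$, and by Lemma~\ref{pmax} it is onto $N_{\tau(x)}$. So we are working with a surjective isometry $\Phi_x|_{M_x}\colon M_x\to N_{\tau(x)}$. This isometry maps each of $M_x\cap M_y$ and $M_x\cap -M_y$ into one of $\pm N_{\tau(y)}$, and both together into $N_{\tau(y)}\cup -N_{\tau(y)}$ by Lemma~\ref{pmax}.

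Next I would argue by contradiction. Suppose both $M_x\cap M_y$ and $M_x\cap -M_y$ are mapped into the same $rN_{\tau(y)}$. Apply the same reasoning to $\Phi_x^{-1}$, which is legitimate because $T^{-1}$ is also a surjective phase-isometry by Lemma~\ref{lem2.0}. Then $\Phi_x(M_x\cap(M_y\cup -M_y))$ would be exactly $N_{\tau(x)}\cap(N_{\tau(y)}\cup -N_{\tau(y)})$, and $rN_{\tau(y)}\cap N_{\tau(x)}$ would have to be covered entirely. Consequently $-rN_{\tau(y)}\cap N_{\tau(x)}$, which is non-empty by Proposition~\ref{Urysohn}, would have no preimage in $M_x\cap(M_y\cup -M_y)$. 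This contradicts the fact that every $u\in -rN_{\tau(y)}\cap N_{\tau(x)}$ has a preimage in $M_x$ (surjectivity onto $N_{\tau(x)}$) that also lies in $M_y\cup -M_y$ (Lemma~\ref{pmax}, since $\Phi_x(M_y\cup -M_y)=N_{\tau(y)}\cup -N_{\tau(y)}$ together with injectivity).

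The main obstacle is making this covering argument airtight. Concretely, one must check that the preimage under $\Phi_x$ of $N_{\tau(x)}\cap(N_{\tau(y)}\cup-N_{\tau(y)})$ is exactly $M_x\cap(M_y\cup-M_y)$. This needs both Lemma~\ref{pmax} and injectivity of $\Phi_x$ (inherited from $T$ via Lemma~\ref{lem2.0}). Then the two connected pieces $M_x\cap M_y$ and $M_x\cap -M_y$ must land in the two different pieces $N_{\tau(x)}\cap N_{\tau(y)}$ and $N_{\tau(x)}\cap -N_{\tau(y)}$, since both target pieces are non-empty and must be hit. This forces $r_1=-(-r_2)$, i.e.\ $r_1=r_2$, which gives \eqref{uniquer} with $r=r_1$.
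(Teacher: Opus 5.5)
Your proposal is correct and is essentially the paper's argument. The paper likewise pulls back a point of $-N_{\tau(x)}\cap rN_{\tau(y)}$ (nonempty by Proposition~\ref{Urysohn}) through the surjective map $\Phi_x$, locates the preimage in $-M_x\cap(M_y\cup -M_y)$, and reads off $r=r'$; you instead use antipodality to move to the face $M_x$ and pull back a point of $N_{\tau(x)}\cap -rN_{\tau(y)}$, and the appeal to $\Phi_x^{-1}$ in your contradiction paragraph is unnecessary, since that single pull-back already closes the argument.
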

\begin{proof}
First, we consider the case when $x = y$. 
Then exactly one of the sets $M_x \cap sM_y$ and $-M_x \cap sM_y$ is empty, and the other equals $sM_y$.
Hence $\Phi_x(M_x \cap sM_y) \cup \Phi_x(-M_x \cap sM_y)=\Phi_y(sM_y)$.
It follows from Lemma~\ref{pmax} that $\Phi_y(sM_y)\subset sN_{\tau(y)}$.
Thus we obtain \eqref{uniquer} setting $r=s$.

We next assume that $x \ne y$. 
Note that, by Proposition \ref{Urysohn}, $M_x \cap s M_y \ne \emptyset$ and $- M_x \cap s M_y \ne \emptyset$. 
By Lemma~\ref{lem3.17}, we can find $r,r' \in \set{-1, 1}$ such that 
\begin{equation*}
   \Phi_x(M_x \cap s M_y) \subset r N_{\tau(y)}\qbox{and}\Phi_x(-M_x \cap s M_y) \subset r' N_{\tau(y)}. 
\end{equation*}
The bijectivity of $\tau$ implies that $\tau(x)\neq \tau(y)$.
Hence it follows from Proposition~\ref{Urysohn} that $-N_{\tau(x)} \cap r N_{\tau(y)} \neq \emptyset$.  
Choose 
\begin{equation*}
    u_0 \in - N_{\tau(x)} \cap rN_{\tau(y)}.
\end{equation*}
To prove $r=r'$, we show that $u_0\in r'N_{\tau(y)}$.
The surjectivity of $\Phi_x$ guarantees the existence of  $f_0\in\sx$ such that $\Phi_x(f_0)=u_0$. 
Since $\Phi_x(f_0)=u_0\in -N_{\tau(x)}$, Lemma~\ref{pmax} implies that   
\begin{equation*}
    f_0\in -M_x.
\end{equation*}
% By the choice of $u_0$, we have $\Phi_x(f_0)=u_0\in -N_{\tau(x)}$. 
% Then 
% \begin{equation*}
%     f_0\in -M_x
% \end{equation*}by Lemma~\ref{pmax}.
On the other hand, since $T\simeq \Phi_x$, we have either $T(f_0)=\Phi_x(f_0)$ or $T(f_0)=-\Phi_x(f_0)$.
Then it follows from $\Phi_x(f_0)\in rN_{\tau(y)}$ that $T(f_0)\in N_{\tau(y)} \cup -N_{\tau(y)}$,
and hence $f_0 \in M_y \cup - M_y$ by Lemma~\ref{lemtau}.
In particular, we have $f_0\in sM_y\cup -sM_y$.
Thus either $f_0 \in - M_x\cap s M_y$ or $f_0 \in - M_x\cap -s M_y$ holds.
If $f_0$ were in $ - M_x\cap -s M_y$, then $u_0=\Phi_x(f_0)$ would be in $-rN_{\tau(y)}$ by the choice of $r$ and Lemma~\ref{lem2.0}.
This contradicts $u_0\in r N_{\tau(y)}$.
Thus we obtain $f_0 \in - M_x\cap s M_y$. 
The choice of $u_0$ and $r'$ shows that $u_0 = \Phi_x(f_0) \in rN_{\tau(y)}\cap r'N_{\tau(y)}$, which implies that $r=r'$.
Hence we conclude that $\Phi_x(M_x\cap sM_y)\cup \Phi_x(-M_x \cap s M_y)\subset r N_{\tau(y)}$.
\end{proof}

Fix an arbitrary point $p \in X$. 
We use the point $p$ as the base point to define a function $\rho \colon X \to \set{-1, 1}$, which was mentioned before.

\begin{defn}\label{defrho}
We define a map $\rho \colon X \to \set{-1, 1}$ by assigning to each $x \in X$ the unique value $r \in \set{-1, 1}$ satisfying \eqref{uniquer}: 
\begin{align}\label{RHO}
\Phi_x(M_x \cap M_p) \cup \Phi_x(-M_x \cap M_p) \subset \rho(x) N_{\tau(p)}. 
\end{align}
\end{defn}
Note that $\rho(p)=1$ by Lemma~\ref{pmax}.

We will define the rearrangement $\Phi$ of $T$ by $\Phi = \rho(x) \Phi_x$ on each $M_x \cup -M_x$, as mentioned before. 
The difficulty is that $\Phi(f)$ may be defined in two different ways when $f$ belongs to the intersection of two maximal convex subsets $t M_x$ and $s M_y$. 
Hence we need to ensure that the rearrangement $\Phi$ of $T$ is well-defined.

\begin{lem}\label{lem3. 19}
Let $(t, x), (s, y)\in\set{-1, 1}\times X$ with $t M_x \cap s M_y \neq \emptyset$. 
Then 
\begin{align*}
\rho(x) \Phi_x = \rho(y) \Phi_y \q \text{on}\hspace{0.5em} tM_x\cap s M_y.
\end{align*} 
\end{lem}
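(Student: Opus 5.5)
The plan is to show that $\Phi_x$ and $\Phi_y$ differ on $tM_x\cap sM_y$ by a \emph{constant} sign $c$. I will then identify $c$ with $\rho(x)\rho(y)$ by testing at a single function that also lies in $M_p\cup -M_p$, where Definition~\ref{defrho} controls both signs.

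\textbf{Step 1 (constant sign).} Fix $(t,x),(s,y)$ with $tM_x\cap sM_y\neq\emptyset$. By Lemma~\ref{pmax}, $\Phi_y(tM_x\cap sM_y)\subset \Phi_y(sM_y)= sN_{\tau(y)}$. By Lemma~\ref{lem3.17}, $\Phi_x(tM_x\cap sM_y)\subset rN_{\tau(y)}$ for some $r\in\set{-1,1}$. Since $\Phi_x\simeq\Phi_y$, for each $f$ in the intersection we have $\Phi_x(f)=\pm\Phi_y(f)$. Because $N_{\tau(y)}\cap -N_{\tau(y)}=\emptyset$, the sign must be $c:=rs$, independently of $f$. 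Hence $\Phi_x=c\,\Phi_y$ on $tM_x\cap sM_y$, and it remains to prove $c=\rho(x)\rho(y)$. It suffices to check this at one function in the intersection. If $x=y$, the intersection is nonempty only when $t=s$, and then $c=1$ trivially, so assume $x\neq y$.

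\textbf{Step 2 ($p\notin\set{x,y}$).} By Proposition~\ref{Urysohn} choose $f\in tM_x\cap sM_y\cap M_p$. Since $f\in tM_x\cap M_p$, Definition~\ref{defrho} (via \eqref{RHO}) gives $\Phi_x(f)\in\rho(x)N_{\tau(p)}$. Likewise $f\in sM_y\cap M_p$ gives $\Phi_y(f)\in\rho(y)N_{\tau(p)}$. Then $\Phi_x(f)=c\,\Phi_y(f)$ forces $\rho(x)=c\rho(y)$, again because $N_{\tau(p)}\cap-N_{\tau(p)}=\emptyset$.

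\textbf{Step 3 ($p\in\set{x,y}$).} Say $p=x$, so $\rho(x)=1$; the case $p=y$ is symmetric. Take any $f\in tM_p\cap sM_y$.
\begin{itemize}
\item If $t=1$: $\Phi_p(f)\in N_{\tau(p)}$ by Lemma~\ref{pmax}, and $\Phi_y(f)\in\rho(y)N_{\tau(p)}$ by \eqref{RHO}. Hence $c=\rho(y)$.
\item If $t=-1$: $-f\in M_p\cap(-s)M_y$, so \eqref{RHO} and Lemma~\ref{lem2.0} give $\Phi_y(f)=-\Phi_y(-f)\in-\rho(y)N_{\tau(p)}$. Meanwhile $\Phi_p(f)\in-N_{\tau(p)}$, so again $c=\rho(y)$.
\end{itemize}
In all cases $c=\rho(x)\rho(y)$, and therefore $\rho(x)\Phi_x=\rho(y)\Phi_y$ on $tM_x\cap sM_y$.

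The main obstacle is Step 1, namely establishing that the relative sign is constant on the whole intersection. Lemma~\ref{lem3.17} provides exactly this, so afterwards only sign bookkeeping at the base point remains.
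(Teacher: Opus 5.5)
Your proof is correct and follows essentially the same route as the paper. You first show that $\Phi_x=c\,\Phi_y$ on $tM_x\cap sM_y$ with a constant sign $c$ (you get this from Lemma~\ref{lem3.17} where the paper invokes Lemma~\ref{lem3.18}), then identify $c=\rho(x)\rho(y)$ through the base point $p$, with the two-point and three-point cases of $\set{x,y,p}$ treated separately; the only cosmetic difference is that in the three-point case you read the signs directly off \eqref{RHO}, instead of first establishing $\rho(x)\Phi_x=\rho(p)\Phi_p$ on $tM_x\cap M_p$ as the paper does.
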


\begin{proof}
Fix $(t,x), (s,y)\in\set{-1,1}\times X$ with $tM_x\cap s M_y\neq \emptyset$.
We first claim that there exists $r\in\set{-1,1}$ such that $\Phi_x=r\Phi_y$ on $tM_x\cap s M_y$.
By Lemma~\ref{lem3.18}, we can find $r_0\in\set{-1,1}$ such that 
\begin{equation}\label{rzero}
    \Phi_x(M_x\cap s M_y)\cup \Phi_x(-M_x\cap s M_y)\subset r_0 N_{\tau(y)}.
\end{equation}
Since $\Phi_x\simeq\Phi_y$, there exists a function $\theta\colon\sx\to\set{-1,1}$ such that $\Phi_x=\theta\Phi_y$.
Note that $r_0 s\in\set{-1,1}$.
We show that $\theta=r_0 s$ on $tM_x\cap s M_y$.
% Suppose, on the contrary, that there exists $f_0\in tM_x\cap s M_y$ such that $\theta(f_0)=-r_0 s$.
% Then $\Phi_x(f_0)=-r_0 s\Phi_y(f_0)$.
% The choice of $r_0$ shows that $\Phi_x(f_0)\in r_0 N_{\tau(y)}$. 
% On the other hand, since $f_0\in s M_y$, we have $-r_0 sf_0\in -r_0M_y$. 
% Then it follows from Lemma~\ref{pmax} that 
% $\Phi_x(f_0)=-r_0 s\Phi_y(f_0)=\Phi_y(-r_0 sf_0)\in-r_0 N_{\tau(y)}$, which contradicts $\Phi_x(f_0)\in r_0 N_{\tau(y)}$.
Fix $f \in t M_x \cap s M_y$ arbitrarily. 
Then we have $\Phi_x(f) \in r_0 N_{\tau(y)}$ by \eqref{rzero}. 
On the other hand, since $f \in s M_y$, it follows from Lemma 3.3 that $\Phi_y(f) \in \Phi_y(s M_y) = s N_{\tau(y)}$. 
Hence we obtain 
$$
r_0 = \Phi_x(f)(\tau(y)) 
= \theta(f) \Phi_y(f)(\tau(y)) 
= \theta(f) s. 
$$
This proves that $\theta(f) = r_0 s$ for every $f \in t M_x \cap s M_y$.
By the choice of $\theta$, we conclude that 
\begin{equation}\label{3.7.1}
    \Phi_x=r\Phi_y \q \text{on}\hspace{0.5em} tM_x\cap s M_y,
\end{equation}
where $r=r_0 s\in\set{-1,1}$.

Since the lemma is trivial if $x = y$, we assume that $x \ne y$. 
Then the set $\set{x, y, p}$ consists of two or three points. 
Assume first that $\set{x, y, p}$ consists of two points. 
Since $x \ne y$, either $x = p$ or $y = p$ occurs. 
We first consider the case when $y=p$.
Suppose that $s=1$. 
Substituting $y=p$ and $s=1$ into \eqref{rzero} and comparing this with \eqref{RHO}, we see that  $r_0=\rho(x)$, and hence $r=r_0 s=\rho(x)$. 
Thus it follows from \eqref{3.7.1} and $\rho(p)=1$ that
\begin{equation}\label{3.7.2}
    \rho(x)\Phi_x=\rho(p)\Phi_p\q \text{on} \hspace{0.5em} tM_x\cap M_p.
\end{equation}
Suppose that $s=-1$ and fix $f \in t M_x \cap -M_p$. 
Then $-f \in - t M_x \cap M_p$. 
It follows from Lemma~\ref{lem2.0} and \eqref{3.7.2} that  
\begin{align*}
- \rho(x) \Phi_x(f)
= \rho(x) \Phi_x(-f) 
= \rho(p) \Phi_p(-f) 
= - \rho(p) \Phi_p(f). 
\end{align*}
Then $\rho(x) \Phi_x(f) = \rho(p) \Phi_p(f)$, which implies that $\rho(x) \Phi_x = \rho(p) \Phi_p$ on $t M_x \cap - M_p$. 
In the same way, we can show the lemma in the case when $x=p$.
Thus the lemma has been proved in the case that the set $\set{x,y,p}$ consists of two points.

Assume next that $\set{x, y, p}$ consists of three points. 
Proposition~\ref{Urysohn} guarantees the existence of $f_0\in tM_x\cap s M_y\cap M_p$.
It follows from  \eqref{3.7.2}  and \eqref{3.7.1} that 
\begin{align*}
    \rho(x)\Phi_x(f_0)=\Phi_p(f_0)=\rho(y)\Phi_y(f_0)\qbox{and}
    \Phi_x(f_0)=r\Phi_y(f_0).
\end{align*}
Thus we obtain $\rho(x)r\Phi_y(f_0)=\rho(y)\Phi_y (f_0)$, which implies that $r=\rho(x)\rho(y)$.
Therefore, we deduce from \eqref{3.7.1} that $\rho(x)\Phi_x=\rho(y)\Phi_y$ on $tM_x\cap sM_y$, as desired.
\end{proof}

Now we are in a position to define the rearrangement $\Phi$ of $T$.	 
Lemma~\ref{lem3. 19} ensures the well-definedness of the following definition. 
Note that, for every $f\in \sx$, there exists at least one $x\in X$ such that $|f(x)|=\norm{f}$. In particular, for every $f\in \sx$, there exists $x\in X$ such that $f\in M_x\cup -M_x$.

\begin{defn}\label{defn3. 20}
Define a map $\Phi \colon \sx \to \sy$ as follows:
for $f \in \sx$, if $f \in M_x \cup -M_x$ for some $x \in X$, then we set
\[
\Phi(f) = \rho(x)\Phi_x(f).
\]
In other words, 
\begin{align*}
\Phi = \rho(x) \Phi_x \quad \text{on} \hspace{0.5em} M_x \cup -M_x 
\end{align*}
for every $x \in X$. 
\end{defn}

\begin{lem}\label{Pmax}
The map $\Phi$ is a surjective phase-isometry and is phase-equivalent to $T$. 
Moreover,  
\begin{align*}
\Phi(M_x) = \rho(x) N_{\tau(x)} \quad \text{and} \quad 
\Phi(- M_x) = - \rho(x) N_{\tau(x)} 
\end{align*}
for every $x \in X$. 
\end{lem}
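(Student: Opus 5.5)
The plan is to show that $\Phi = \vartheta T$ for some $\vartheta\colon\sx\to\set{-1,1}$ with $\vartheta(f)=\vartheta(-f)$, and then invoke Lemma~\ref{piso}. The formula for $\Phi(M_x)$ is read off directly from Lemma~\ref{pmax}.

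First, the function $\vartheta$. Fix $f\in\sx$ and choose $x\in X$ with $f\in M_x\cup -M_x$. Such an $x$ exists because $\abs{f}$ attains its norm. Set $\vartheta(f)=\rho(x)\theta_x(f)$. Then $\Phi(f)=\rho(x)\Phi_x(f)=\vartheta(f)T(f)$. Well-definedness of this choice is the content of Lemma~\ref{lem3. 19}. Indeed, if also $f\in M_y\cup -M_y$, then $f\in tM_x\cap sM_y$ for suitable $t,s$. Lemma~\ref{lem3. 19} then gives $\rho(x)\Phi_x(f)=\rho(y)\Phi_y(f)$, so the values agree. Since $T(f)\neq 0$, the signs $\rho(x)\theta_x(f)$ and $\rho(y)\theta_y(f)$ also agree.

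Next, evenness. If $f\in M_x\cup -M_x$, then $-f\in M_x\cup -M_x$ as well. Using \eqref{thetax}, $\vartheta(-f)=\rho(x)\theta_x(-f)=\rho(x)\theta_x(f)=\vartheta(f)$. Hence $\Phi=\vartheta T$ with $\vartheta$ even, so $\Phi\simeq T$. Lemma~\ref{piso} then shows that $\Phi$ is a surjective phase-isometry.

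Finally, the image of $M_x$. On $M_x$ we have $\Phi=\rho(x)\Phi_x$, so $\Phi(M_x)=\rho(x)\Phi_x(M_x)=\rho(x)N_{\tau(x)}$ by Lemma~\ref{pmax}. Similarly, $\Phi(-M_x)=\rho(x)\Phi_x(-M_x)=-\rho(x)N_{\tau(x)}$. Equivalently, this follows from $\Phi(-f)=-\Phi(f)$, which Lemma~\ref{lem2.0} gives now that $\Phi$ is a surjective phase-isometry.

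The only delicate point is the well-definedness of $\vartheta$ and $\Phi$. Everything else is bookkeeping on top of Lemma~\ref{lem3. 19}, which already handles the overlap of two maximal convex sets. The surjectivity argument needs no separate treatment, since Lemma~\ref{piso} supplies it.
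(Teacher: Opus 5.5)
Your proposal is correct and essentially matches the paper's proof. It shows that the sign $\rho(x)\theta_x(f)$ is independent of $x$, using Lemma~\ref{lem3. 19} together with $T(f)\neq 0$; it gets evenness from \eqref{thetax}, applies Lemma~\ref{piso}, and reads off $\Phi(M_x)$ from Lemma~\ref{pmax}. The only cosmetic difference is that you also get $\Phi(-M_x)$ directly from $\Phi_x(-M_x)=-N_{\tau(x)}$, whereas the paper uses antipodality (Lemma~\ref{lem2.0}); both are valid.
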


\begin{proof}
% Recall that $\Phi_x = \theta_x T$ by Definition~\ref{def0},
% and hence $\Phi=\rho(x) \theta_x T$ on $M_x\cup -M_x$. 
% We first claim that, for each $f \in \sx$, the value $\rho(x) \theta_x(f)$ is independent of the choice of $x \in X$ such that $f \in M_x\cup -M_x$.  
% Fix $f \in \sx$, and assume that $f \in t M_x \cap s M_y$ for some $(t, x), (s, y) \in \set{-1, 1} \times X$. 
% The definition of $\Phi$ implies that
% \begin{align*}
% \Phi(f) 
% = \rho(x) \Phi_x(f) 
% = \rho(x) \theta_x(f) T(f). 
% \end{align*}
% Similarly, we have $\Phi(f) = \rho(y) \theta_y(f) T(f)$. 
% Hence  $\rho(x) \theta_x(f) T(f) = \rho(y) \theta_y(f) T(f)$, which implies that $\rho(x) \theta_x(f) = \rho(y) \theta_y(f)$. 

We first claim that, for each $f \in \sx$, the value $\rho(x) \theta_x(f)$ is independent of the choice of $x \in X$ such that $f \in M_x\cup -M_x$.  
Fix $f \in \sx$, and assume that $f \in t M_x \cap s M_y$ for some $(t, x), (s, y) \in \set{-1, 1} \times X$. 
The definitions of $\Phi$ and $\Phi_x$ imply that
\begin{align*}
\Phi(f) 
= \rho(x) \Phi_x(f) 
= \rho(x) \theta_x(f) T(f). 
\end{align*}
Similarly, we have $\Phi(f) = \rho(y) \theta_y(f) T(f)$. 
Hence  $\rho(x) \theta_x(f) T(f) = \rho(y) \theta_y(f) T(f)$, which implies that $\rho(x) \theta_x(f) = \rho(y) \theta_y(f)$.

Let us show that $\Phi$ is a surjective phase-isometry.
Define a map $\theta \colon \sx \to \set{-1, 1}$ by 
\begin{align*}
\theta(f) = \rho(x) \theta_x(f), 
\end{align*}
where $x \in X$ is a point such that $f \in M_x\cup -M_x$. 
The previous paragraph guarantees that the map $\theta$ is well-defined. 
For each $f \in \sx$, we can find $x \in X$ such that $f\in  M_x\cup -M_x$, and thus 
\begin{align*}
\Phi(f) = \rho(x) \Phi_x(f) = \rho(x) \theta_x(f) T(f) = \theta(f) T(f). 
\end{align*} 
Hence we obtain $\Phi = \theta T$.
% , which shows that $\Phi \simeq T$.
Moreover, the definition of $\theta$ and \eqref{thetax} yield $\theta(f) = \theta(- f)$. 
Combining the last two equalities with Lemma~\ref{piso}, we see that $\Phi$ is a surjective phase-isometry and $\Phi\simeq T$.

It remains to show that $\Phi(M_x)=\rho(x)N_{\tau(x)}$ and $\Phi(-M_x)=-\rho(x)N_{\tau(x)}$.
Let $x \in X$ and $f \in M_x$. 
It follows from Lemma~\ref{pmax} that $\Phi_x (M_x)=N_{\tau(x)}$. 
The definition of $\Phi$ implies that $\Phi = \rho(x) \Phi_x$ on $M_x$, and consequently 
\begin{align*}
\Phi(M_x) 
= \rho(x) \Phi_x(M_x) 
= \rho(x) N_{\tau(x)}. 
\end{align*}
Since $\Phi$ is a surjective phase-isometry, it preserves the antipodal points by Lemma~\ref{lem2.0}.
Hence we obtain $\Phi(- M_x) = - \Phi(M_x) = - \rho(x) N_{\tau(x)}$. 
This completes the proof. 
\end{proof}

\section{The proof of the main theorem}
We first establish a proposition to prove the main theorem.
Using this proposition, we determine the form of the surjective phase-isometry $T$.
The proof of the following proposition is based on \cite[Lemma~2.17]{cue}.  

\begin{prop}\label{addbishop}
    Suppose that $f\in\sx$ and $x_0\in X$. 
    Suppose further that $x'\in  X$ with $|f(x')|=\norm{f}$. 
    Define a constant $t\in\set{-1, 1}$ by 
    \begin{equation*}
        t=\begin{dcases}
            \frac{f(x_0)}{|f(x_0)|}&\mathrm{if}\ f(x_0)\neq0,\\
            1&\mathrm{if}\ f(x_0)=0.
        \end{dcases}
    \end{equation*}
     Under the above assumptions, if $|f(x_0)|<1$,
     then there exists $g\in M_{x_0}$ such that
    \begin{equation*}
        f+(1 -|f(x_0)|)tg\in t M_{x_0}\cap f(x')M_{x'}. 
    \end{equation*}
\end{prop}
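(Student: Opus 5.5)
The plan is to build $g$ explicitly from a Urysohn function, capped so that adding $(1-|f(x_0)|)tg$ to $f$ never pushes the value above $1$ in the direction $t$. Write $c=|f(x_0)|<1$, so that $f(x_0)=tc$ in both cases of the definition of $t$.

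First, $x_0\neq x'$, because $|f(x')|=\norm{f}=1>c$. Since $X$ is locally compact Hausdorff, Urysohn's lemma gives a continuous function $u\colon X\to[0,1]$ with compact support such that $u(x_0)=1$ and $u(x')=0$. Define
\[
g(y)=\min\set{u(y),\ \frac{1-tf(y)}{1-c}}\qquad(y\in X).
\]

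Next I check that $g\in M_{x_0}$.
\begin{enumerate}[(i)]
\item $g$ is continuous as a minimum of continuous functions.
\item $g\ge 0$, because $|tf(y)|\le 1$.
\item $g\le u$, so $g$ has compact support and hence $g\in\cx$. Also $\norm{g}\le 1$.
\item $g(x_0)=\min\set{1,(1-tc)/(1-c)}=\min\set{1,1}=1$, since $tf(x_0)=t^2c=c$.
\end{enumerate}
Hence $g\in\sx$ and $g(x_0)=1$, that is, $g\in M_{x_0}$.

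Now set $h=f+(1-c)tg$ and verify the two membership conditions. At $x_0$ we get $h(x_0)=tc+(1-c)t=t$. At $x'$ we get $h(x')=f(x')$, because $g(x')\le u(x')=0$. For the norm, consider $th(y)=tf(y)+(1-c)g(y)$.
\begin{enumerate}[(i)]
\item Since $g\ge0$, we have $th(y)\ge tf(y)\ge -1$.
\item By the choice of $g$, $(1-c)g(y)\le 1-tf(y)$, so $th(y)\le 1$.
\end{enumerate}
Thus $\norm{h}\le 1$, and $|h(x_0)|=1$ gives $\norm{h}=1$. Therefore $h\in tM_{x_0}$, and $h\in f(x')M_{x'}$ with $f(x')\in\set{-1,1}$.

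The only point that needs care is the cap in the definition of $g$. A plain Urysohn bump could make $|h|$ exceed $1$ near $x_0$, where $tf$ may be close to $1$. The truncation by $(1-tf)/(1-c)$ handles this globally without shrinking neighbourhoods, and it still equals $1$ at $x_0$ precisely because $tf(x_0)=c$.
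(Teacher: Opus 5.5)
Your proof is correct, and it takes a genuinely different route from the paper. Following \cite[Lemma~2.17]{cue}, the paper builds $g=\sum_{n\ge 1}2^{-n}g_n$. Each $g_n\in M_{x_0}$ is a Urysohn function vanishing on $F_0\cup F_n$, where the sets $F_n$ are dyadic shells of $x\mapsto|f(x_0)-f(x)|$. The bound $\norm{h}\le 1$ then needs a three-case analysis. On $F_0$ one has $g=0$. On $F_k$ the deficit $|g|\le 1-2^{-k}$ compensates the excess $|f(x)|-|f(x_0)|\le \delta|f(x_0)-f(x')|/2^k$. Off all the $F_m$ one has $f=f(x_0)$. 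Finally, $g(x')=0$ because $x'\in F_0$.

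You replace all of this with one Urysohn bump $u$ satisfying $u(x')=0$, truncated by the continuous function $(1-tf)/(1-c)$. The pointwise bounds $0\le(1-c)g\le 1-tf$ give $-1\le th\le 1$ at once, and $0\le g\le u$ gives $g\in\cx$. The truncation is inactive at $x_0$ precisely because $tf(x_0)=c$.

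Your argument is shorter and avoids both the series and the case distinction, at the price of using pointwise minima of real-valued functions. The paper's construction uses only Urysohn functions and norm estimates. As a by-product it gives a $g$ that vanishes wherever $|f(x)-f(x_0)|\ge\delta|f(x_0)-f(x')|/2$, a localization that is not needed here.

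A small variant of yours: capping by $(1-|f|)/(1-c)$ instead also works, and it makes the requirement $u(x')=0$ unnecessary, since $|f(x')|=1$ then forces $g(x')=0$.
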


\begin{proof}
    Put $\d=(1-|f(x_0)|)/2\in(0, 1)$. 
    We define closed subsets $F_0$ and $F_n$ of $X$ by
    \begin{equation*}
        F_0=\set{x\in X:\frac{\d|f(x_0)-f(x')|}{2}\leq |f(x_0)-f(x)|}, 
    \end{equation*}
    \begin{equation*}
        F_n=\set{x\in X:\frac{\d|f(x_0)-f(x')|}{2^{n+1}}\leq|f(x_0)-f(x)|\leq \frac{\d|f(x_0)-f(x')|}{2^n}} 
    \end{equation*}
    for each $n\in \N$.
    By definition, we have $x_0\notin F_0\cup F_n$ for all $n\in \N$.
    Thus, for every $n\in \N$, there exists $g_n\in M_{x_0}$ such that $g_n(F_0\cup F_n)=\set{0}$ by Urysohn's lemma. 
    Set $g=\sum_{n=1}^{\infty}g_n/2^n$.
    Since $g(x_0)=1$, we have $g\in M_{x_0}$.
    Set $t\in\set{-1,1}$ as in the statement and define 
    \begin{equation*}
        h=f+(1-|f(x_0)|)tg.
    \end{equation*}
    We first show that $\norm{h}=1$.  
    Fix $x\in X$. 
    We need to consider the following three cases:
    $x\in F_0$, $x\in F_{k}$ for some $k\in\N$ and 
    $x\in X\setminus \bigcup_{m\in\N\cup\set{0}}F_m$.
 
    If $x\in F_0$, then $g_n(x)=0$ for all $n\in \N$, and thus $g(x)=0$. 
    This implies that $|h(x)|=|f(x)|\leq1$.  
    
    Suppose that $x\in F_{k}$
    for some $k\in\N$. 
    Since $g_{k}(x)=0$ and $|g_n(x)|\leq 1$ for all $n\in \N\setminus \set{k}$, it follows that 
    \begin{equation*}
        |g(x)|\leq \sum_{n\neq k}\frac{|g_n(x)|}{2^n}\leq \sum_{n\neq k}\frac{1}{2^n}=1-\frac{1}{2^k}.
    \end{equation*}
    We deduce from the definition of $F_k$ that $|f(x)|\leq \d|f(x_0)-f(x')|/2^k+|f(x_0)|$.
    Having in mind that the definition of $\delta$ and $|t_0|=1$, these two inequalities show that
    \begin{align*}
        |h(x)|&\leq |f(x)|+(1-|f(x_0)|)|g(x)|\\ 
        &\leq \bigg(\frac{\d |f(x_0)-f(x')|}{2^k}+|f(x_0)|\bigg)+(1-|f(x_0)|)\bigg(1-\frac{1}{2^k}\bigg)\\
        &\leq 1+\frac{\d}{2^k}(|f(x_0)-f(x')|-2). 
    \end{align*}
    % The most right-hand side of the last inequality is less than $1$ since $|f(x_0)-f(x')|<2$ by $|f(x_0)|<1$. 
    The right-hand side of the last inequality is less than $1$, since $|f(x_0)-f(x')|<2$, which follows from the assumption $|f(x_0)|<1$.
    Thus $|h(x)|\leq 1$. 
        
    If $x\notin F_m$ for all $m\in \N\cup\set{0}$, then it follows from the definition of $F_m$ that $f(x)=f(x_0)$. 
    Thus we obtain 
    \begin{equation*}
        |h(x)|\leq |f(x_0)|+1-|f(x_0)|=1,
    \end{equation*}
    which implies that $|h(x)|\leq1$.  
    
    We have shown that $|h(x)|\leq 1$ for every $x\in X$, and hence $\norm{h}\leq1$. 
    Moreover, by $g\in M_{x_0}$ and the definition of $t$, we have   
    $h(x_0)=f(x_0)+(1-|f(x_0)|)tg(x_0)=t$.
    Thus $\norm{h}=1$, which implies that   
    $h\in tM_{x_0}$. 
    We show that $h\in f(x')M_{x'}$. 
    It follows from $\d\in(0,1)$ that
    $\delta|f(x_0)-f(x')|/2
    <|f(x_0)-f(x')|$, and then $x'\in F_0$.
    This gives $g_n(x')=0$ for all $n\in \N$,
    which implies that $g(x')=0$. 
    The last equality 
    shows that $h(x')=f(x')$, and hence $h\in f(x')M_{x'}$.
    We thus conclude that 
    $h\in t M_{x_0}\cap f(x')M_{x'}$.  
\end{proof}

Now we are ready to prove the main theorem.
We determine the form of the surjective phase-isometry $T$ after determining that of $\Phi$, the rearrangement of $T$.

\begin{proof}[\textbf{Proof of Theorem~\ref{MainTheorem}}]
    We divide the proof into four steps and first determine the form of $\Phi$. 
We define $\sigma \colon Y \to X$ to be the inverse map $\tau^{-1}$ of $\tau$ and define $\a \colon Y \to \set{-1,1}$ by $\a = \rho \circ \sigma$. 

\begin{step}\label{stepA}
Let $f \in \sx$ and let $(t, q) \in \set{-1, 1} \times Y$. 
Then 
\begin{align}\label{EqK}
f \in t M_{\sigma(q)} \quad \text{if and only if} \quad \Phi(f) \in \a(q) t N_q. 
\end{align}
In particular, $|f(\sigma(q))| = 1$ if and only if $|\Phi(f)(q)| = 1$. 
\end{step}

\begin{sproof}
Fix $f \in \sx$ and $(t, q) \in \set{-1, 1} \times Y$. 
Lemma~\ref{Pmax} shows that 
\begin{align*}
\Phi(t M_{\sigma(q)}) 
= \rho(\sigma(q)) t N_{\tau(\sigma(q))}
= \a(q) t N_q, 
\end{align*}
which proves \eqref{EqK}. 
\end{sproof}

To determine the form of $\Phi$, fix $f \in \sx$ and $q \in Y$. 
If $|f(\sigma(q))| = 1$, then $f \in t M_{\sigma(q)}$ for some $t \in \set{-1, 1}$, and hence $\Phi(f) \in \a(q) t N_q$. 
In this case, we see that 
\begin{align*}
\Phi(f)(q) = \a(q) t = \a(q) f(\sigma(q)). 
\end{align*}
The next purpose is to show that the last equality holds even if $|f(\sigma(q))| < 1$. 
Hence we assume that $|f(\sigma(q))| < 1$.

Set $t_0\in\set{-1,1}$ by 
    \begin{equation*}
        t_0=\begin{cases}
            \frac{f(\sigma(q))}{|f(\sigma(q))|}& \text{if}\ f(\sigma(q))\neq 0,\\
            1&\text{if}\ f(\sigma(q))=0.
        \end{cases}
    \end{equation*}
    Choose $x'\in X$ so that $f\in f(x')M_{x'}$. 
    Applying Proposition~\ref{addbishop} with $\sigma(q)$ instead of $x_0$,  
    there exists $g\in M_{\sigma(q)}$ such that 
    \begin{equation*}
        h=f+(1 -|f(\sigma(q))|)t_0g\in t_0 M_{\sigma(q)}\cap f(x')M_{x'}.
    \end{equation*}

\begin{step}\label{stepB}
The equality 
    \begin{equation}\label{4.7}
        |\Phi(f)(q)|=|f(\sigma(q))|
    \end{equation}
holds. 
\end{step}

\begin{sproof}
Since $h,f\in f(x')M_{x'}$, it follows from Lemma~\ref{Pmax} that $\Phi(h),\Phi(f)\in \rho(x')f(x')N_{\tau(x')}$. 
Hence Proposition~\ref{ConvNorm} implies that $\norm{\Phi(h)+\Phi(f)}=2 = \norm{h+f}$. 
    Since $\Phi$ is a phase-isometry, we must have  
    \begin{equation}\label{isometry}
        \norm{\Phi(h)-\Phi(f)}=\norm{h-f}.
    \end{equation}
Let us show that
    $|f(\sigma(q))|\leq|\Phi(f)(q)|$. 
    Since $h\in t_0M_{\sigma(q)}$, \eqref{EqK} yields $\Phi(h)\in\a(q)t_0N_q$. 
By the triangle inequality, we have 
\begin{align*}
1-|\Phi(f)(q)| 
= |\Phi(h)(q)|-|\Phi(f)(q)|
\leq |\Phi(h)(q)-\Phi(f)(q)|
\leq\norm{\Phi(h)-\Phi(f)}.
\end{align*}
It follows from  \eqref{isometry} and $\norm{t_0g}=1$ that   
\begin{align*}
\norm{\Phi(h)-\Phi(f)}
= \norm{h-f}
= \norm{(1-|f(\sigma(y))|)t_0g}
= 1-|f(\sigma(q))|. 
\end{align*}
Combining the last inequality and the last equality, we have 
\begin{align}\label{IneqK}
1-|\Phi(f)(q)| 
= |\Phi(h)(q)|-|\Phi(f)(q)|
\leq |\Phi(h)(q)-\Phi(f)(q)|
=1-|f(\sigma(q))|, 
\end{align}
which implies that $|f(\sigma(q))|\leq|\Phi(f)(q)|$. 

Note that $|\Phi(f)(q)|<1$ by Step~\ref{stepA}. 
Applying the same argument to  $(\Phi^{-1},\Phi(f),\sigma(q))$ instead of $(\Phi,f,q)$, we also obtain $|\Phi(f)(\tau(\sigma(q)))|\leq |\Phi^{-1}(\Phi(f))(\sigma(q))|$.
    Having in mind that $\sigma=\tau^{-1}$, the last inequality can be rewritten as $|\Phi(f)(q)|\leq |f(\sigma(q))|$.
    We thus obtain \eqref{4.7}.
\end{sproof}

\begin{step}\label{stepC}
The equality 
\begin{align}\label{EqL}
\Phi(f)(q) = \a(q) f(\sigma(q)). 
\end{align}
holds. 
\end{step}

\begin{sproof}
If $\Phi(f)(q)=0$, then $f(\sigma(q))=0$ by \eqref{4.7}.
Hence \eqref{EqL} holds.
Suppose next that $\Phi(f)(q)\neq 0$. 
% Let $h$ be the function defined before Step~\ref{stepB}.
% By the choice of $h$ and \eqref{4.7}, we obtain $|\Phi(h)(y)| = |h(\sigma(y))| = 1$.
% Since we assumed $|f(\sigma(y))|<1$, \eqref{4.7} implies that $|\Phi(f)(y)|<1$.
% These two facts yield $\Phi(h)(y)-\Phi(f)(y)\neq 0$.
It follows from \eqref{4.7} and \eqref{IneqK} that 
\begin{equation*}
    |\Phi(h)(q)|-|\Phi(f)(q)|=|\Phi(h)(q)-\Phi(f)(q)|.
\end{equation*}
Having in mind that $\Phi(f)(q)\neq 0$, 
by the equality condition of the triangle inequality, there exists $c> 0$ such that $\Phi(f)(q)=c\Phi(h)(q)$.
Since $h \in t_0 M_{\sigma(q)}$, we have $\Phi(h) \in \a(q) t_0 N_q$ by \eqref{EqK}, and hence 
\begin{align}\label{EqPreComp}
\Phi(f)(q) = c  \a(q) t_0. 
\end{align}
By taking absolute values of the above equality, \eqref{4.7} implies that $c = |\Phi(f)(q)| = |f(\sigma(q))|$. 
Substituting this into \eqref{EqPreComp}, we see that 
\begin{align*}
\Phi(f)(q)=|f(\sigma(q))| \a(q) t_0 = \a(q) f(\sigma(q)), 
\end{align*}
where the last equality follows from the definition of the constant $t_0$.  
\end{sproof}

We have proven that equality \eqref{EqL} holds for every $f \in \sx$ and $q \in Y$. 

\begin{step}\label{stepD}
The map $\sigma$ is a homeomorphism and the function $\a$ is continuous. 
\end{step}

\begin{sproof}
 Fix an open subset $U$ of $X$ and 
    $q_0\in\sigma^{-1}(U)$. 
    By Urysohn's lemma, there exists $f_0\in\sx$
    such that $f_0(\sigma(q_0))=1$ and
     $f_0(X\setminus U)=\set{0}$.  
    We set an open subset $V$ of $Y$ by
    \begin{equation*}
    V=\set{q\in Y:|\Phi(f_0)(q)|>\frac{1}{2}}.
    \end{equation*}
    The equality \eqref{EqL} gives $|\Phi(f_0)(q_0)|=|f_0(\sigma(q_0))|=1>1/2$, 
    and then $q_0\in V$.  
    Fix $q\in V$. 
    It follows from \eqref{EqL} and the definition of $V$ that 
    $|f_0(\sigma(q))|=|\Phi(f_0)(q)|>1/2$,
    and thus $f_0(\sigma(q))\neq0$.  
    This shows that $\sigma(q)\in U$.  
    Since $q\in V$ is arbitrarily chosen,
    we obtain $V\subset \sigma^{-1}(U)$.   
    Hence $\sigma^{-1}(U)$ is open in $Y$,
    and then $\sigma$ is continuous on $Y$. 
    By a quite similar argument, we see that $\tau$, 
    the inverse map of $\sigma$, 
    is continuous on $X$.
    Thus we conclude that $\sigma$ is a homeomorphism.

    It remains to prove that $\a\colon Y\to\set{-1, 1}$ 
    is continuous on $Y$. 
    Let $q_0\in Y$. 
    There exists $h_{0}\in\sx$ such that $h_0(\sigma(q_0))=1$. 
    Since $h_0\circ \sigma$ is continuous, there exists an open neighborhood $W$ of $q_0$ such that $h_0\circ \sigma\neq 0$ on $W$. 
    Thus it follows from \eqref{EqL} that 
    \begin{equation*}
        \a =\frac{\Phi(h_0)}{h_0\circ \sigma} \q \text{on}\hspace{0.5em} W, 
    \end{equation*} 
    which implies that $\a$ is continuous on the open neighborhood $W$ of $q_0$. 
    Since $q_0$ is arbitrarily chosen, $\a$ is continuous on $Y$.  
    This completes the proof.
\end{sproof}

Finally, we determine the form of the surjective phase-isometry $T$. 
    Since $T \simeq \Phi$ by Lemma~\ref{Pmax}, we can find 
    $\varepsilon\colon\sx\to\set{-1, 1}$ such that  $T=\varepsilon\Phi$. 
It therefore follows from \eqref{EqL} that, for every $f \in \sx$ and $q \in Y$, 
    \begin{equation*}
        T(f)(q)=\varepsilon(f)\Phi(f)(q)=\varepsilon(f)\a(q) f(\sigma(q)). 
    \end{equation*}
The proof is complete. 
\end{proof}

\section{Remarks}
\begin{rem}
As we mentioned Remark~\ref{rem1}, we demonstrate that the real Banach space $C_0(L, \R)$ is a CL-space. 
The $\ell^\infty$-direct sum of $C_0(L, \R)$ and the one dimensional Banach space $\R$ is isometrically isomorphic to $C(L_\infty, \R)$: 
\begin{align*}
C(L_\infty,\R) \cong \cl \oplus_{\ell^\infty} \R,
\end{align*}
where $L_\infty$ stands for the one-point compactification of $L$. 
Furthermore, it is known that $C(L_{\infty},\R)$ is a CL-space (see \cite[Chapter~3]{Lima}). 
Thus the space $C_0(L, \R)$ is an $\ell^\infty$-summand of a CL-space. 
As a direct consequence of \cite[Proposition~8]{MiguelRafael}, we see that every $\ell^\infty$-summand of a CL-space is also a CL-space. 
This shows that $C_0(L, \R)$ is a CL-space. 
\end{rem}

While our main result can be regarded as a corollary of \cite[Theorem 3.4]{TZH}, a certain step in their proof do not seem to be fully established.

\begin{rem}
    In this remark, we use the same notation as in \cite{TZH}.
    In the proof of \cite[Theorem~3.4]{TZH}, the authors of \cite{TZH} attempt to construct a map $g$ from the unit sphere $S_X$ of a CL-space $X$ to the unit sphere $S_Y$ of an arbitrary Banach space $Y$. 
    Note that, by Lemma~\ref{lemHOM}, for every maximal convex subset $F$ of $S_X$, there exists an extreme point $x^*$ of the closed unit ball $B_{X^*}$ of the dual space $X^*$ of $X$ such that $F=\set{x\in S_X:x^*(x)=1}$.   
    The set on the right-hand side is denoted by $F_{x^*}$ in \cite{TZH}.
    Define the set $\mathrm{mex}(B_{X^*})$ as the set of all extreme points $x^*$ of $B_{X^*}$ such that $F_{x^*}$ is a maximal convex subset of $S_X$.
    By the axiom of choice, we define a subset $\mathrm{mex}^+(B_{X^*})$ of $\mathrm{mex}(B_{X^*})$ such that, for each $x^*\in \mathrm{mex}(B_{X^*})$,  exactly one of $x^*$ and $-x^*$ belongs to $\mathrm{mex}^+(B_{X^*})$. 
    We note that, every maximal convex subset $F$ of $S_X$ can be written as $F=tF_{x^*}=F_{tx^*}$ for some $t\in\set{-1,1}$ and $x^*\in \mathrm{mex}^+(B_{X^*})$. 
    In fact, for every maximal convex subset  $F$ of $S_X$, there exists $x^*\in \mathrm{mex}(B_{X^*})$ such that $F=F_{x^*}$. 
    By the definition of $\mathrm{mex}^+(B_{X^*})$, there exist $t\in\set{-1,1}$ and $z^*\in \mathrm{mex}^+(B_{X^*})$ such that $z^*=tx^*$. 
    Then $F=F_{tz^*}$.

    The map $g$ is constructed as follows:
    Fix a base point $x^*_0\in \mathrm{mex}^+(B_{X^*})$. 
    For every $f\in S_X$, we have $x\in F_{tx^*}$ for some $t\in\set{-1,1}$ and  $x^*\in \mathrm{mex}^+(B_{X^*})$. 
    By \cite[Lemma~3.3]{TZH}, there exist a sign $\theta_{tx^*}\in\set{-1,1}$ and a map $g_{tx^*}\colon S_X \to S_Y$ such that $g_{x_0^*}=\theta_{tx^*}g_{tx*}$ on $F_{x^*_0}\cap  F_{tx^*}$.
    Using $\theta_{tx^*}$ and $g_{tx^*}$, the authors of \cite{TZH} define a map $g$ by
    \begin{equation*}
        g(x)=\begin{cases}
            g_{x^*_0}(x)&\text{if}\ x\in F_{x_0^*},\\
            -g_{x^*_0}(-x)&\text{if}\ x\in -F_{x_0^*},\\
            \theta_{x^*}g_{x^*}(x)&\text{if}\ x\in F_{x^*},\\
            -\theta_{x^*}g_{x^*}(-x)&\text{if}\ x\in -F_{x^*}. 
        \end{cases}
    \end{equation*}
    To guarantee that the map $g$ is well-defined, we assume that an element $x\in S_X$ belongs to two maximal convex subsets $tF_{x^*}$ and $s F_{z^*}$ for some $t,s\in\set{-1,1}$ and $x^*,z^*\in \mathrm{mex}^+(B_{X^*})$. 
    In their proof, the case where $x\in F_{x^*}\cap F_{z^*}$ is proved.
    However, it appears to us that the remaining cases are not fully justified in the proof.
    For example, consider the case where $x\in F_{x^*}\cap -F_{z^*}=F_{x^*}\cap F_{-z^*}$.
    In this case, we need to show that $\theta_{x^*}g_{x^*}(x)=-\theta_{z^*}g_{z^*}(-x)$. 
    By the same argument as in the case $x\in F_{x^*}\cap F_{z^*}$, we can lead to $\theta_{x^*}g_{x^*}(x)=\theta_{-z^*}g_{-z^*}(x)$, but we are not sure that $\theta_{-z^*}g_{-z^*}(x)=-\theta_{z^*}g_{z^*}(-x)$ holds or not.
    In contrast, Lemma~\ref{lem3. 19} establishes  the well-definedness of $\Phi$, which plays the same role as $g$, by considering all possible cases.
\end{rem}

\subsection*{Acknowledgement}
The second author was supported by JST SPRING, Grant Number JPMJSP2121.
The authors would like to express our sincere gratitude to Professor Takeshi Miura, our supervisor, for his valuable guidance and support.
% The third author was partially supported by JSPS KAKENHI
% Grant Number JP 25K07028.

\end{document}